\newtheoremstyle{mystyle}%   % Name
  {}%                         % Space above
  {}%                         % Space below
  {}%                         % Body font
  {}%                         % Indent amount
  {\bfseries}%                % Theorem head font
  {.}%                        % Punctuation after theorem head
  { }%                        % Space after theorem head, ' ', or \newline
  {}%                         % Theorem head spec (can be left empty, meaning `normal')
\theoremstyle{mystyle}
\theoremstyle{definition}
\newtheorem{theorem}{Theorem}[section]
\newtheorem{proposition}{Proposition}[section]
\newtheorem{lemma}{Lemma}[section]
\newtheorem{remark}{Remark}[section]
\numberwithin{equation}{section}
\begin{document}
\title{Conservation operator processes from asymptotic representation theory and their CLT}
\author[R. Sato]{Ryosuke SATO}
\address{Department of Mathematics, Faculty of Science, Hokkaido University, Kita 10, Nishi 8, Kita-ku, Sapporo, Hokkaido, 060-0810, Japan}
\email{r.sato@math.sci.hokudai.ac.jp}

\begin{abstract}
    In this paper, we examine applications of the theory of operator-valued processes to algebraic methods in probability theory. We show a central limit theorem for general conservation operator processes. Utilizing this, we analyze the asymptotic behavior of processes derived from unitary groups and quantum unitary groups as their ranks tend to infinity, thereby providing applications of asymptotic representation theory.
\end{abstract}

\maketitle

\allowdisplaybreaks{
%%%%%%%%%%%%%%%%%%%%%%%%%%%%%%%%%%%%%%%%%%%%%%%%%%%%%%%%%%%%%%%%%%%%%%%%%%%%%%%%%%%%%%%%%%%%%%%%%%%%%%%%%%%%%%%%%%%%%%%%%%%%%%%%%%%%%%%%%%%%%%%%%%%%%%%%%%%%%%%%%%%%
\section{Introduction}
It is well known that by second quantization, a linear operator on a Hilbert space extends to a linear operator on the symmetric Fock space, called a \emph{conservation operator}. A family of such operators indexed by a time parameter is known as a \emph{conservation operator process}\footnote{It is also often called a gauge process.}. Under suitable conditions, it has the same time-ordered moments as a compound Poisson process and thus plays a fundamental role in quantum stochastic calculus\footnote{The role of Brownian motion is played by sums of creation and annihilation operators.} (see \cite{Parthasarathy:book}). Furthermore, it naturally appears in algebraic methods of probability theory. Therefore, it is quite natural to explore applications of quantum stochastic calculus to probability theory with algebraic origin.

In this paper, we are especially interested in conservation operator processes related to the asymptotic representation theory. Let us consider the unitary groups $U(N)$ and their inductive limit $U(\infty)=\varinjlim_N U(N)$, called the \emph{infinite-dimensional unitary group}. The celebrated Edrei--Voiculescu theorem gives a complete classification of indecomposable characters of $U(\infty)$. Notably, every indecomposable character of $U(\infty)$ can be approximated by irreducible characters of $U(N)$ (see Section \ref{sec:unitary_groups} for more details). As we will discuss in this paper, from the viewpoint of non-commutative probability theory, such an approximation of characters implies a law of large numbers (LLN) for conservation operator processes derived from the center $Z(\mathfrak{gl}_N)$ of the universal enveloping algebra $U(\mathfrak{gl}_N)$. See Proposition \ref{prop:LLN}.

Our first main result, Theorem \ref{thm:general_CLT}, establishes a central limit theorem (CLT) for conservation operator processes in a general setting. Then, combining this general result with the asymptotic analysis of irreducible representations of $Z(\mathfrak{gl}_N)$, we obtain the CLT for conservation operator processes derived from $Z(\mathfrak{gl}_N)$. See Theorem \ref{thm:CLT_unitary}.

It is probably natural to compare our results with the CLTs in \cite{BB13,BB14,Kuan16}. In fact, these papers implicitly study operator-valued processes given as the representations of $U(\mathfrak{gl}_N)$ on a symmetric Fock space and show their CLTs, heavily relying on the combinatorial calculus of joint moments of such processes. In this paper, we study operator-valued processes on a symmetric Fock space, which appear through second quantization. Thus, we do not address the processes in \cite{BB13,BB14,Kuan16}, and for their CLTs, it remains unclear whether an algebraic approach based on quantum stochastic calculus can be developed. However, our result is a first attempt to apply quantum stochastic calculus in this direction.

Remarkably, our framework is directly applicable to the quantum group case. In Section \ref{sec:quantum_unitary_groups}, we discuss conservation operator processes derived from the quantized universal enveloping algebra $U_q(\mathfrak{gl}_N)$. In Proposition \ref{prop:asymptotics_q_immanants}, we study the asymptotic behavior of irreducible representations of the center $Z_q(\mathfrak{gl}_N)$ of $U_q(\mathfrak{gl}_N)$. Then, in Theorem \ref{thm:CLT_quantum_unitary}, we obtain the CLT for conservation operator processes derived from the quantum unitary groups.

% \medskip
% This paper is organized as follows: In Section \ref{sec:general}, we review basic facts of symmetric Fock space and conservation operator processes. Then, we show our first main result, Theorem \ref{thm:general_CLT}, which is a CLT for general conservation operator processes. In Section \ref{sec:unitary_groups}, we discuss the conservation operator processes derived from unitary groups and show their CLT in Theorem \ref{thm:CLT_unitary}. In addition, we extends this result to the quantum unitary groups. Especially, we study the asymptotic behavior of eigenvalues of $Z_q(\mathfrak{gl}_N)$

%%%%%%%%%%%%%%%%%%%%%%%%%%%%%%%%%%%%%%%%%%%%%%%%%%%%%%%%%%%%%%%%%%%%%%%%%%%%%%%%%%%%%%%%%%%%%%%%%%%%%%%%%%%%%%%%%%%%%%%%%%%%%%%%%%%%%%%%%%%%%%%%%%%%%%%%%%%%%%%%%%%%
\section{Conservation operator processes on symmetric Fock spaces}\label{sec:general}
In this section, we review the basic facts of symmetric Fock spaces and then study conservation operator processes, which are operator-valued processes on a symmetric Fock space. Let $\mathfrak{h}$ be a complex Hilbert space. The \emph{symmetric Fock space} $\mathcal{F}(\mathfrak{h})$ over $\mathfrak{h}$ is a Hilbert space defined as 
\[\mathcal{F}(\mathfrak{h}):=\bigoplus_{n=0}^\infty \mathfrak{h}^{\odot n},\] 
where $\mathfrak{h}^{\odot n}$ is the $n$-th symmetric tensor product space of $\mathfrak{h}$ for all $n\geq 1$, and $\mathfrak{h}^{\odot 0}:=\mathbb{C}\Omega$. Here, $\Omega$ is a unit vector, called a \emph{vacuum vector}. For any $\psi \in \mathfrak{h}$, the \emph{exponential vector} $e(\psi)\in \mathcal{F}(\mathfrak{h})$ is defined by
\[e(\psi):=\sum_{n=0}^\infty \frac{1}{\sqrt{n!}}\psi^{\odot n}.\]
We have $e(0)=\Omega$ and $\langle e(\psi), e(\varphi)\rangle=\exp(\langle \psi, \varphi\rangle)$ for any $\psi, \varphi\in \mathfrak{h}$. Moreover, the exponential vectors are linearly independent, and their linear span, denoted by $\mathcal{E}(\mathfrak{h})$, is dense in $\mathcal{F}(\mathfrak{h})$.

This fact implies the following factorizability: if $\mathfrak{h}=\mathfrak{h}_1\oplus \mathfrak{h}_2$, then the mapping 
\[e(\psi_1+\psi_2)\in \mathcal{F}(\mathfrak{h}_1\oplus\mathfrak{h}_2)\mapsto e(\psi_1)\otimes e(\psi_2)\in \mathcal{F}(\mathfrak{h}_1)\otimes \mathcal{F}(\mathfrak{h}_2) \quad (\psi_1\in \mathfrak{h}_1, \psi_2\in \mathfrak{h}_2)\] 
gives a well-defined unitary map. Namely, $\mathcal{F}(\mathfrak{h}_1\oplus \mathfrak{h}_2)\cong \mathcal{F}(\mathfrak{h}_1)\otimes \mathcal{F}(\mathfrak{h}_2)$. This factorizability plays a crucial role in quantum stochastic analysis (see \cite{Parthasarathy:book} and Remarks \ref{rem:indep_inc},\ref{rem:stat_inc} for more details).

\medskip
Let $H$ be a self-adjoint linear operator on $\mathfrak{h}$, and $\Lambda(H)$ denotes the infinitesimal generator of the one-parameter unitary group $(\lambda(e^{\mathrm{i}t H}))_{t\in \mathbb{R}}$ on $\mathcal{F}(\mathfrak{h})$ given by 
\[\lambda(e^{\mathrm{i}t H})e(\psi):= e(e^{\mathrm{i}t H}\psi) \quad (\psi\in \mathfrak{h}),\]
i.e., $\lambda(e^{\mathrm{i}tH})=e^{\mathrm{i}t \Lambda(H)}$. Let $B(\mathfrak{h})$ denote the space of all bounded linear operators on $\mathfrak{h}$. For any $H\in B(\mathfrak{h})$, we define 
\[\Lambda(H):=\Lambda\left(\frac{H+H^*}{2}\right)+\mathrm{i}\Lambda\left(\frac{H-H^*}{2\mathrm{i}}\right)\]
and call it the \emph{conservation operator}. We remark that even if $H$ is bounded, $\Lambda(H)$ is possibly unbounded, but $\mathcal{E}(\mathfrak{h})$ is contained in its domain. In particular, we have 
\begin{equation}\label{eq:inn_cons}
  \langle \Lambda(H)e(\psi), e(\varphi)\rangle = \langle H\psi, \varphi\rangle \langle e(\psi), e(\varphi)\rangle
\end{equation}
for any $\psi, \varphi\in \mathfrak{h}$. Moreover, for any $H_1, \dots, H_n\in B(\mathfrak{h})$ and $\psi\in \mathfrak{h}$, the domain of $\Lambda(H_1)$ contains $\Lambda(H_2)\cdots \Lambda(H_n)e(\psi)$. In particular, if $H_1, \dots, H_n$ are self-adjoint, for any $\psi, \varphi \in \mathfrak{h}$,
\begin{align*}
    \langle \Lambda(H_1)\cdots \Lambda(H_n)e(\psi), e(\varphi)\rangle
    & = \left.(-\mathrm{i})^n\frac{d^n}{dt_1\cdots dt_n}\right|_{t_1, \dots, t_n=0} \langle e(e^{\mathrm{i}t_1 H_1}\cdots e^{\mathrm{i}t_n H_n}\psi), e(\varphi)\rangle \nonumber\\
    & = \left.(-\mathrm{i})^n\frac{d^n}{dt_1\cdots dt_n}\right|_{t_1, \dots, t_n=0} \exp\left(\langle e^{\mathrm{i}t_1 H_1}\cdots e^{\mathrm{i}t_n H_n}\psi, \varphi \rangle \right).
\end{align*}
For instance, for any $H_1, H_2\in B(\mathfrak{h})$, we have 
\begin{equation}\label{eq:inn_cons2}
    \langle \Lambda(H_1)\Lambda(H_2)e(\psi), e(\varphi)\rangle = (\langle H_1\psi, \varphi\rangle\langle H_2\psi, \varphi\rangle +\langle H_1H_2\psi, \varphi\rangle)\langle e(\psi), e(\varphi)\rangle.
\end{equation}

\medskip
To introduce operator-valued processes, let $p$ be a non-atomic Borel spectral measure on $\mathbb{R}_{\geq 0}$, i.e., $p$ assigns the Borel $\sigma$-algebra $\mathcal{B}(\mathbb{R}_{\geq 0})$ to the orthogonal projections on $\mathfrak{h}$ and satisfies that
\begin{itemize}
    \item $p(\mathbb{R}_{\geq 0})=1$,
    \item $p(\bigcup_{n=1}^\infty E_n)=\sum_{n=1}^\infty p(E_n)$ if $\{E_n\}_{n=1}^\infty \subset \mathcal{B}(\mathbb{R}_{\geq 0})$ is mutually disjoint, where the right-hand side strongly converges,
    \item $p(\{t\})=0$ for all $t\in \mathbb{R}_{\geq 0}$.
\end{itemize}
We define $\mathfrak{h}_E:=p(E)\mathfrak{h}$ and $\psi_E:=p(E)\psi$ for any $E\in \mathcal{B}(\mathbb{R}_{\geq 0})$ and $\psi\in \mathfrak{h}$. In the literature of quantum stochastic calculus, $p$ is called a \emph{non-atomic observable} (see \cite{Parthasarathy:book}).

Let us assume that $H\in B(\mathfrak{h})$ commutes with $p$, i.e., $p([0, t])H=Hp([0, t])=:H_t$ holds for all $t\in \mathbb{R}_{\geq 0}$. The \emph{conservation operator process} $(\Lambda_t(H))_{t\geq 0}$ is defined by $\Lambda_t(H):=\Lambda(H_t)$. For any $0\leq s< t$, we define the \emph{increments} by
\[\Lambda_{[s, t)}(H):=\Lambda_t(H)-\Lambda_s(H).\] 
By Equation \eqref{eq:inn_cons}, we have 
\[\Lambda_{[s, t)}(H)e(\psi)=\Lambda(H_{[s, t)})e(\psi) \quad (\psi\in \mathfrak{h}),\]
where $H_{[s, t)}:=H_t-H_s=Hp([s, t))$. Moreover, by the factorizability of $\mathcal{F}(\mathfrak{h})$, we have

\begin{equation}\label{eq:factorizability1}
    \Lambda_{[s, t)}(H)e(\psi_{[s, t)})\in \mathcal{F}(\mathfrak{h}_{[s, t)}),
\end{equation}  
\begin{equation}\label{eq:factorizability2}
    U_{s, t}\Lambda_{[s, t)}(H)e(\psi)=e(\psi_{[0, s)})\otimes \Lambda_{[s, t)}(H)e(\psi_{[s, t)})\otimes e(\psi_{[t, \infty)}),
\end{equation}
where $U_{s, t}\colon \mathcal{F}(\mathfrak{h})\to \mathfrak{F}(\mathfrak{h}_{[0, s)})\otimes \mathfrak{F}(\mathfrak{h}_{[s, t)})\otimes \mathfrak{F}(\mathfrak{h}_{[t, \infty)})$ is a unitary operator given by 
\[U_{s, t}e(\psi):=e(\psi_{[0, s)})\otimes e(\psi_{[s, t)})\otimes e(\psi_{[t, \infty)}).\]
In fact, if $H$ is self-adjoint, for all $r\in \mathbb{R}$ we have 
\[\lambda(e^{\mathrm{i}r H_{[s, t)}})e(\psi_{[s, t)})=e(e^{\mathrm{i}r H_{[s, t)}}\psi_{[s, t)})\in \mathcal{F}(\mathfrak{h}_{[s, t)}),\]
\[U_{s, t}\lambda(e^{\mathrm{i}r H_{[s, t)}})e(\psi)=e(\psi_{[0, s)})\otimes e(e^{\mathrm{i}r H_{[s, t)}}\psi_{[s, t)})\otimes e(\psi_{[t, \infty)}).\]
Furthermore, the same statement as Equations \eqref{eq:factorizability1}, \eqref{eq:factorizability2} holds for a multitude of operators, that is, for any  $H_1, \dots, H_m\in B(\mathfrak{h})$, we have 
\begin{equation}\label{eq:factorizability3}
    \Lambda_{[s, t)}(H_1)\cdots \Lambda_{[s, t)}(H_m)e(\psi_{[s, t)})\in \mathcal{F}(\mathfrak{h}_{[s, t)}).
\end{equation}
In addition, let $0\leq s_1<t_1<s_2<\cdots <t_k$ and $I_j:=[s_j, t_j)$ for each $j=1, \dots, k$. For every $j_1, \dots, j_m\in \{1, \dots, k\}$ we have 
\begin{align}\label{eq:factorizability4}
    &U_{s_1, t_1, \dots, s_k, t_k}\Lambda_{I_{j_1}}(H_1)\cdots \Lambda_{I_{j_m}(H_m)}e(\psi) \nonumber\\
    &= e(\psi_{[0, s_1)})\otimes \left(\prod_{l: j_l=1}\Lambda_{I_1}(H_l) \right)e(\psi_{I_1})\otimes \cdots \otimes \left(\prod_{l: j_l=k}\Lambda_{I_k}(H_l)\right)e(\psi_{I_k})\otimes e(\psi_{[t_k, \infty)}),
\end{align}
where $U_{s_1, t_1, \dots, s_k, t_k}$ is a unitary map from $\mathcal{F}(\mathfrak{h})$ to $\mathcal{F}(\mathfrak{h}_{[0, s_1)})\otimes  \mathcal{F}(\mathfrak{h}_{[s_1, t_1)})\otimes \cdots \otimes\mathcal{F}(\mathfrak{h}_{[t_k, \infty)})$ defined analogously to $U_{s, t}$, and the products in the right-hand side keep the order $\Lambda_{I_{j_1}}(H_1), \dots, \Lambda_{I_{j_m}}(H_m)$.

\begin{remark}\label{rem:indep_inc}
    The factorizability in Equations \eqref{eq:factorizability3}, \eqref{eq:factorizability4} can be regarded as independence of increments. In fact, for any $\psi, \varphi\in \mathfrak{h}$, we have 
    \[\frac{\langle \Lambda_{I_{j_1}}(H_1)\cdots \Lambda_{I_{j_m}}(H_m)e(\psi), e(\varphi)\rangle }{\langle e(\psi), e(\varphi)\rangle}=\prod_{i=1}^k\frac{\langle \prod_{l: j_l=i} \Lambda_{I_i}(H_l)e(\psi_{I_i}), e(\varphi_{I_i})\rangle }{\langle e(\psi_{I_i}), e(\varphi_{I_i})\rangle},\]
    where the linear form $\langle \,\cdot\, e(\psi), e(\varphi)\rangle/\langle e(\psi), e(\varphi)\rangle$ can be regarded as expectation values of increments. Thus, the expectations of the increments of disjoint intervals are multiplicative, that is, these increments are independent.
\end{remark}

\medskip
In what follows, we focus on the following setting: let $V$ be a Hilbert space and
\[\mathfrak{h}:=L^2(\mathbb{R}_{\geq 0}; V)\cong L^2(\mathbb{R}_{\geq 0})\otimes V.\] 
For any $E\in \mathcal{B}(\mathbb{R}_{\geq 0})$, we denote by $p(E)$ the orthogonal projection onto $\mathfrak{h}_E:=L^2(E; V)\subset \mathfrak{h}$. Then, $p$ is a non-atomic observable. Moreover, every bounded linear operator $\mathsf{h}$ on $V$ yields a conservation operator process $(\Lambda_t(H))_{t\geq 0}$, where $H:=1\otimes \mathsf{h}$.

For any $\psi \in V$ and $E\in \mathcal{B}(\mathbb{R}_{\geq 0})$, we define $\psi_E:=\mathbbm{1}_E\otimes \psi\in \mathfrak{h}$ and particularly $\psi_t:=\psi_{[0, t)}$. By Equation \eqref{eq:inn_cons}, we have 
\[\frac{\langle \Lambda_{[s, t)}(H)e(\psi_T), e(\varphi_T)\rangle}{\langle e(\psi_T), e(\varphi_T)\rangle}=(t-s)\langle \mathsf{h}\psi, \varphi\rangle  =\frac{\langle \Lambda_{t-s}(H)e(\psi_{t-s}), e(\varphi_{t-s})\rangle}{\langle e(\psi_{t-s}), e(\varphi_{t-s})\rangle}\]
for any $\psi, \varphi\in V$ and $0\leq s<t\leq T$.

\begin{remark}\label{rem:stat_inc}
    Let $\mathsf{h}_i\in B(V)$ and $H_i:=1\otimes \mathsf{h}_i$ for $i=1, \dots, m$. Then, the stationary increments property of $(\Lambda_t(H_1))_{t\geq 0}, \dots, (\Lambda_t(H_m))_{t\geq 0}$ holds as follows: let $0\leq s_1<t_1<\cdots < s_k<t_k\leq T$ and $I_j:=[s_j, t_j)$. For every $j_1, \dots, j_m\in \{1, \dots, k\}$ we have 
    \[\frac{\langle \Lambda_{I_{j_1}}(H_1)\cdots \Lambda_{I_{j_m}}(H_m) e(\psi_T), e(\varphi_T)\rangle}{\langle e(\psi_T), e(\varphi_T)\rangle} = \prod_{i=1}^k \frac{\langle \prod_{l:j_l=i} \Lambda_{t_i-s_i}(H_l)e(\psi_{t_i-s_i}), e(\varphi_{t_i-s_i})\rangle}{\langle e(\psi_{t_i-s_i}), e(\varphi_{t_i-s_i})\rangle}.\]
\end{remark}

Let us fix $\psi \in \mathfrak{h}$. If $X$ is a linear operator whose domain contains $\mathcal{E}(\mathfrak{h})$, the \emph{expectation} $\langle X\rangle_\psi$ and the \emph{variance} $\mathrm{Var}_\psi(X)$ of $X$ with respect to the coherent state $e(\psi)$ are defined by 
\[ \langle X\rangle_\psi:= \frac{\langle Xe(\psi), e(\psi)\rangle}{\langle e(\psi), e(\psi)\rangle}, \quad  \mathrm{Var}_\psi(X):=\langle (X-\langle X\rangle_\psi)^*(X-\langle X\rangle_\psi)\rangle_\psi.\]

\medskip
The following are basic properties of expectation and variance.
\begin{lemma}\label{lem:variance}
    Let $\mathsf{h}\in B(V)$ and $X_t:=\Lambda_t(1\otimes \mathsf{h})$ for any $t\geq 0$. For any $t>0$ we have 
    \[\langle X_t\rangle_{\psi_t} = t \langle X_1\rangle_{\psi_1},\] 
    \[\quad \mathrm{Var}_{\psi_t}(X_t)=t\mathrm{Var}_{\psi_1}(X_1).\]
\end{lemma}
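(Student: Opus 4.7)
The plan is to read off both identities directly from the inner-product formulas \eqref{eq:inn_cons} and \eqref{eq:inn_cons2}, using the tensor structure $\mathfrak{h}=L^2(\mathbb{R}_{\geq 0})\otimes V$ together with the fact that $H=1\otimes \mathsf{h}$ commutes with $p$. Under this identification, $H_t = p([0,t))H$ acts as $M_{\mathbbm{1}_{[0,t)}}\otimes \mathsf{h}$, where $M_{\mathbbm{1}_{[0,t)}}$ denotes multiplication by the indicator; hence $H_t\psi_t = \mathbbm{1}_{[0,t)}\otimes \mathsf{h}\psi$, and the two scalar quantities I will need throughout are $\langle H_t\psi_t, \psi_t\rangle = t\langle \mathsf{h}\psi, \psi\rangle$ and $\|H_t\psi_t\|^2 = t\|\mathsf{h}\psi\|^2$.

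For the mean, taking $\varphi=\psi=\psi_t$ in \eqref{eq:inn_cons} (with $H$ replaced by $H_t$) and dividing by $\|e(\psi_t)\|^2$ immediately yields
\[\langle X_t\rangle_{\psi_t} = \langle H_t\psi_t, \psi_t\rangle = t\langle \mathsf{h}\psi, \psi\rangle,\]
which at $t=1$ gives $\langle X_1\rangle_{\psi_1} = \langle \mathsf{h}\psi, \psi\rangle$, so the first identity follows.

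For the variance, I rewrite $\mathrm{Var}_{\psi_t}(X_t) = \langle X_t^* X_t\rangle_{\psi_t} - |\langle X_t\rangle_{\psi_t}|^2$ and compute the first term via \eqref{eq:inn_cons2}. Decomposing $\mathsf{h}=\mathsf{a}+\mathrm{i}\mathsf{b}$ into its self-adjoint real and imaginary parts shows that on $\mathcal{E}(\mathfrak{h})$ one has $X_t^* = \Lambda(H_t^*)$, so \eqref{eq:inn_cons2} applied with $H_1=H_t^*$, $H_2=H_t$ and $\psi=\varphi=\psi_t$ gives
\[\langle X_t^* X_t\rangle_{\psi_t} = |\langle H_t\psi_t, \psi_t\rangle|^2 + \|H_t\psi_t\|^2 = t^2 |\langle \mathsf{h}\psi, \psi\rangle|^2 + t\|\mathsf{h}\psi\|^2.\]
Subtracting $|\langle X_t\rangle_{\psi_t}|^2 = t^2|\langle \mathsf{h}\psi, \psi\rangle|^2$ leaves $t\|\mathsf{h}\psi\|^2$, and comparing with the $t=1$ case shows this equals $t\,\mathrm{Var}_{\psi_1}(X_1)$. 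There is essentially no serious obstacle; the only point requiring a brief justification is the identification $X_t^* = \Lambda(H_t^*)$ on exponential vectors when $\mathsf{h}$ is not self-adjoint, which follows at once from the defining decomposition of $\Lambda(H)$ stated just before \eqref{eq:inn_cons}.
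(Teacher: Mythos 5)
Your proof is correct and takes essentially the same route as the paper: the mean is read off from \eqref{eq:inn_cons}, and the variance from \eqref{eq:inn_cons2} with $X_t^*=\Lambda(H_t^*)$ on exponential vectors, the cross term cancelling against $|\langle X_t\rangle_{\psi_t}|^2$ to leave $t\|\mathsf{h}\psi\|^2=t\,\mathrm{Var}_{\psi_1}(X_1)$. The only cosmetic difference is that you make this cancellation explicit, whereas the paper passes directly to $\langle (1\otimes\mathsf{h})_t^*(1\otimes\mathsf{h})_t\psi_t,\psi_t\rangle=t\langle \mathsf{h}^*\mathsf{h}\psi,\psi\rangle$.
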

\begin{proof}
    By Equation \eqref{eq:inn_cons}, we have 
    \[\langle X_t\rangle_{\psi_t} = \langle (1\otimes\mathsf{h})_t\psi_t, \psi_t\rangle=t\langle(1\otimes \mathsf{h})\psi, \psi\rangle=t\langle X_1\rangle_{\psi_1}.\]
    Next, by Equation \eqref{eq:inn_cons2}, we have 
    \begin{align*}
        \mathrm{Var}_{\psi_t}(X_t)
        &=\langle X_t^*X_t\rangle_{\psi_t}-\overline{\langle X_t\rangle_{\psi_t}}\langle X_t\rangle_{\psi_t}\\
        &=\langle (1\otimes \mathsf{h})^*_t(1\otimes \mathsf{h})_t\psi_t, \psi_t\rangle\\
        &=t\langle \mathsf{h}^*\mathsf{h}\psi, \psi\rangle\\
        &=t\mathrm{Var}_{\psi_1}(X_1).
    \end{align*}
\end{proof}

\medskip 
Let us recall that for arbitrary family $(X^{(i)})_i$ of Gaussian random variables with zero mean, their moments can be calculated by Wick's formula: 
\[\mathbb{E}[X^{(i_1)}\cdots X^{(i_m)}]=\begin{dcases}
    \sum_{\pi\in \mathcal{P}_2(m)}\prod_{\{a, b\}\in \pi} \mathbb{E}[X^{(i_a)}X^{(i_b)}] & m \text{ is even}, \\ 0 & m \text{ is odd},
\end{dcases}\]
where $\mathcal{P}_2(m)$ is the set of pair partitions of $\{1, \dots, m\}$.

In this sense, the following theorem states that a family of conservation operator processes at time $Nt$ converges in the moment sense to a Gaussian family as $N\to \infty$. The setting is as follows: let $V_N$ be a Hilbert space and $\mathfrak{h}_N:=L^2(\mathbb{R}_{\geq 0}; V_N)$ for every $N\geq 1$. We fix $\psi_N\in V_N$ and define $\langle\, \cdot\, \rangle_{N, t} :=\langle\, \cdot\,\rangle_{(\psi_{N})_t}$ and $\mathrm{Var}_{N, t}(\,\cdot\,)$ for any $t>0$.

\begin{theorem}\label{thm:general_CLT}
    Let $(X^{(1)}_{N, t})_{t\geq 0}, \dots, (X^{(m)}_{N, t})_{t\geq 0}$ be $m$ conservation operator processes on $\mathcal{F}(\mathfrak{h}_N)$, where $X^{(i)}_{N, t}:=\Lambda_t(1\otimes \mathsf{h}^{(i)}_N)$ and $\mathsf{h}^{(i)}_N\in B(V_N)$ for $i=1, \dots, m$. Assume that $\mathsf{h}^{(i)}_N\psi_N\neq 0$ for $i=1, \dots, m$ and there exists $C>0$ such that
    \[\langle \widehat X^{(i_1)}_{N, t}\cdots \widehat X^{(i_k)}_{N, t} \rangle_{N, t}<C\]
    for every $i_1, \dots, i_k \in \{1, \dots, m\}$, $k=1, \dots, m$, and $N\geq 1$, where 
    \[\widehat X^{(i)}_{N, t}:=\frac{X^{(i)}_{N, t}-\langle X^{(i)}_{N, t} \rangle_{N, t}}{\mathrm{Var}_{N, t}(X^{(i)}_{N, t})^{1/2}} \quad (i=1, \dots, m).\] 
    Then, the family of processes $(\widehat X^{(1)}_{N, Nt})_{t\geq 0}, \dots , (\widehat X^{(m)}_{N, Nt})_{t\geq 0}$ with scaled time $Nt$ converges in the moment sense to a Gaussian family as $N\to \infty$, i.e., 
    \[\lim_{N\to \infty} \langle \widehat X^{(1)}_{N, Nt} \cdots  \widehat X^{(m)}_{N, Nt}\rangle_{N, Nt}=\begin{dcases}
        \sum_{\pi\in \mathcal{P}_2(m)} \prod_{\{a < b\}\in \pi} C_{a, b}(t) & m\text{ is even}, \\ 0 & m\text{ is odd},
    \end{dcases}\]
    where $C_{a, b}(t):=\lim_{N\to\infty}\langle \widehat X^{(a)}_{N, t} \widehat X^{(b)}_{N, t} \rangle_{N, t}$.
\end{theorem}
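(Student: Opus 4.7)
The approach is a Lindeberg-style block decomposition: chop $[0, Nt]$ into $N$ blocks of length $t$ so that $\widehat X^{(l)}_{N, Nt}$ becomes an $N^{-1/2}$-rescaled sum of identically distributed, independent, uniformly bounded, centered operator increments; then read off the Gaussian limit by moment-method combinatorics. The three essential ingredients I will use are: the additivity $\Lambda_{Nt}(H) = \sum_{j} \Lambda_{I_j}(H)$ combined with the variance scaling in Lemma \ref{lem:variance}; the factorization of mixed moments over disjoint time intervals from Equation \eqref{eq:factorizability4} (and Remark \ref{rem:indep_inc}); and the stationarity in Remark \ref{rem:stat_inc} that identifies each single-block moment with the corresponding moment at time $t$.

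Concretely, set $I_j := [(j-1)t, jt)$ and $H^{(l)}_N := 1 \otimes \mathsf{h}^{(l)}_N$, and define the centered normalized block increment
\[ \widetilde Y^{(l)}_{N, j} := \frac{\Lambda_{I_j}(H^{(l)}_N) - \langle \Lambda_{I_j}(H^{(l)}_N)\rangle_{N, Nt}}{\mathrm{Var}_{N, t}(X^{(l)}_{N, t})^{1/2}}. \]
A short computation using Equation \eqref{eq:inn_cons} shows $\langle \Lambda_{I_j}(H^{(l)}_N)\rangle_{N, Nt} = \langle X^{(l)}_{N, t}\rangle_{N, t}$ for every $j$; together with $\mathrm{Var}_{N, Nt}(X^{(l)}_{N, Nt}) = N\, \mathrm{Var}_{N, t}(X^{(l)}_{N, t})$ from Lemma \ref{lem:variance}, this yields the identity $\widehat X^{(l)}_{N, Nt} = N^{-1/2} \sum_{j=1}^N \widetilde Y^{(l)}_{N, j}$. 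Expanding the $m$-fold product gives
\[ \langle \widehat X^{(1)}_{N, Nt} \cdots \widehat X^{(m)}_{N, Nt}\rangle_{N, Nt} = \frac{1}{N^{m/2}}\sum_{j_1, \dots, j_m = 1}^N \langle \widetilde Y^{(1)}_{N, j_1} \cdots \widetilde Y^{(m)}_{N, j_m}\rangle_{N, Nt}. \]
For each multi-index $(j_1, \dots, j_m)$ I will apply a linear extension of Equation \eqref{eq:factorizability4} followed by Remark \ref{rem:stat_inc} to rewrite the moment as
\[ \langle \widetilde Y^{(1)}_{N, j_1} \cdots \widetilde Y^{(m)}_{N, j_m}\rangle_{N, Nt} = \prod_{B \in \pi(j_1, \dots, j_m)} \Big\langle \prod_{l \in B}^{\rightarrow} \widehat X^{(l)}_{N, t} \Big\rangle_{N, t}, \]
where $\pi(j_1, \dots, j_m)$ denotes the partition of $\{1, \dots, m\}$ into the level sets of $l \mapsto j_l$ and the arrow indicates the ordered product inherited from $B \subset \{1, \dots, m\}$.

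Finally, group the multi-indices by their partition $\pi$; the number of $(j_1, \dots, j_m)$ realising a given $\pi$ is $N(N-1) \cdots (N - |\pi| + 1) = N^{|\pi|}(1 + o(1))$, so each partition contributes with prefactor $N^{|\pi| - m/2}(1 + o(1))$. A partition with any singleton block contributes $0$ by centering. Among the rest, all blocks have size $\geq 2$, so $|\pi| \leq m/2$, and the uniform bound $C$ on block moments (valid since each block has at most $m$ elements) then kills every term with $|\pi| < m/2$. Only pair partitions survive, forcing $m$ even; each one contributes $\prod_{\{a<b\} \in \pi} \langle \widehat X^{(a)}_{N, t} \widehat X^{(b)}_{N, t}\rangle_{N, t}$, which tends to $\prod_{\{a<b\} \in \pi} C_{a,b}(t)$ by the definition of $C_{a,b}$. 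For odd $m$, no partition can simultaneously avoid singletons and have $|\pi| \geq m/2$, so the limit is $0$. The main technical step I expect is the block-factorization of $\widetilde Y$-moments itself: Equation \eqref{eq:factorizability4} is stated only for pure $\Lambda$-products, so some care is required to verify that subtracting the scalar means and normalizing pushes cleanly through the unitary factorization to produce the ordered single-interval moment on each block; once that identity is in hand, the remainder is elementary asymptotic bookkeeping with falling factorials.
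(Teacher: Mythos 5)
Your proposal is correct and follows essentially the same route as the paper's own proof: decompose $\widehat X^{(l)}_{N,Nt}$ into $N^{-1/2}$ times a sum of centered, normalized increments over the blocks $[jt,(j+1)t)$ via Lemma \ref{lem:variance}, factor mixed moments over blocks using Remarks \ref{rem:indep_inc} and \ref{rem:stat_inc}, and then run the partition/falling-factorial count in which singletons vanish by centering and the uniform bound $C$ kills all $|\pi|<m/2$ terms, leaving only pair partitions. The only difference is presentational (you index blocks by level-set partitions and flag explicitly that the factorization must be extended linearly past the scalar centering, which the paper does implicitly), so no substantive gap.
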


\begin{remark}
    By Equation \eqref{eq:inn_cons2}, we have 
    \[\langle \widehat X^{(a)}_{N, t} \widehat X^{(b)}_{N, t}\rangle_{N, t}=\frac{\langle (1\otimes \mathsf{h}^{(a)}_N)_t (1\otimes \mathsf{h}^{(b)}_N)_{t} (\psi_N)_t, (\psi_N)_t\rangle}{\|(1\otimes \mathsf{h}^{(a)}_N)(\psi_N)_t\| \|(1\otimes \mathsf{h}^{(b)}_N)(\psi_N)_t\|}=\frac{\langle \mathsf{h}^{(a)}_N \mathsf{h}^{(b)}_N \psi_N, \psi_N\rangle}{\|\mathsf{h}^{(a)}_N \psi_N\| \|\mathsf{h}^{(b)}_N \psi_N\|}.\]
    Thus, $C_{a, b}(t)$ does not depend on $t$. On the other hand, unlike the classical case, $C_{a, b}(t)\neq C_{b, a}(t)$ since $X^{(a)}_{N, t}$ and $X^{(b)}_{N, t}$ are non-commutative in general. 
\end{remark}

\begin{proof}
    By Lemma \ref{lem:variance}, we have 
    \[\langle X^{(i)}_{N, Nt}\rangle_{N, Nt}=N\langle X^{(i)}_{N, t}\rangle_{N, t}, \quad \mathrm{Var}_{N, Nt}(X^{(i)}_{N, Nt})=N\mathrm{Var}_{(\psi_N)_t}(X^{(i)}_{N, t})\]
    for each $i=1, \dots, m$. It implies that
    \begin{align*}
        \widehat X^{(i)}_{N, Nt}
        &=\frac{1}{N^{1/2}\mathrm{Var}_{N, t}(X^{(i)}_{N, t})^{1/2}}(X^{(i)}_{N, Nt}-N\langle X^{(i)}_{N, t}\rangle_{N, t})\\
        &=\frac{1}{N^{1/2}\mathrm{Var}_{N, t}(X^{(i)}_{N, t})^{1/2}} \sum_{n=0}^{N-1}(X^{(i)}_{N, [nt, (n+1)t)}-\langle X^{(i)}_{N, t}\rangle_{N, t})\\
        &=\frac{1}{N^{1/2}}\sum_{n=0}^{N-1}\widehat X^{(i)}_{N, [nt, (n+1)t)},
    \end{align*}
    where $\widehat{X}^{(i)}_{N, [nt, (n+1)t)}$ is defined similarly to $\widehat{X}^{(i)}_{N, t}$. Here, we used the following equalities
    \[\langle X^{(i)}_{N, [nt, (n+1)t)}\rangle_{N, t}=\langle X^{(i)}_{N, t}\rangle_{N, t}, \quad \mathrm{Var}_{N, t}(X^{(i)}_{N, [nt, (n+1)t)})=\mathrm{Var}_{N, t}(X^{(i)}_{N, t}).\]
    See Remark \ref{rem:stat_inc}. By the above expansion of $\widehat{X}^{(i)}_{N, Nt}$ and Remarks \ref{rem:indep_inc}, \ref{rem:stat_inc}, we have 
    \begin{align*}
        \langle \widehat X^{(1)}_{N, Nt} \cdots  \widehat X^{(m)}_{N, Nt}\rangle_{N, Nt}
        &=\frac{1}{N^{m/2}}\sum_{n_1, \dots, n_m=0}^{N-1}\langle \widehat X^{(1)}_{N, [n_1t, (n_1+1)t)} \cdots \widehat X^{(m)}_{N, [n_mt, (n_m+1)t)}\rangle_{N, Nt}\\
        &=\frac{1}{N^{m/2}}\sum_{\pi}N(N-1)\cdots (N-|\pi|+1)\prod_{\{i_1<\cdots<i_k\}\in \pi}\langle \widehat X^{(i_1)}_{N, t}\cdots \widehat X^{(i_k)}_{N, t} \rangle_{N, t},
    \end{align*}
    where the summation is over all partitions of $\{1, \dots, m\}$ into at most $N$ blocks. Since
    \[\frac{1}{N^{m/2}}N(N-1)\cdots (N-|\pi|+1)=\frac{1}{N^{m/2-|\pi|}}\left(1-\frac{1}{N}\right)\cdots \left(1-\frac{|\pi|-1}{N}\right),\]
    if $m/2>|\pi|$, the associated term converges to zero as $N\to \infty$. On the other hand, if $m/2<|\pi|$, there should exist a singleton in $\pi$, and hence, the associated term vanishes. Thus, the remaining term has to satisfy $|\pi|=m/2$, i.e., $m$ should be even, and $\pi$ is a pair partition. Therefore, we obtain the desired formula.
\end{proof}

%%%%%%%%%%%%%%%%%%%%%%%%%%%%%%%%%%%%%%%%%%%%%%%%%%%%%%%%%%%%%%%%%%%%%%%%%%%%%%%%%%%%%%%%%%%%%%%%%%%%%%%%%%%%%%%%%%%%%%%%%%%%%%%%%%%%%%%%%%%%%%%%%%
\section{Conservation operator processes derived from unitary groups}\label{sec:unitary_groups}
\subsection{The infinite-dimensional unitary group and its characters}

Let $U(N)$ be the unitary group of rank $N$. The \emph{infinite-dimensional unitary group} $U(\infty)$ is defined by $\varinjlim_N U(N)$, where $U(N)$ is naturally embedded into the upper-left corner of $U(N+1)$. We remark that the inductive limit topology of $U(\infty)$ is not locally compact. Thus, it does not possess a left-invariant Haar measure, and it causes difficulties in the Fourier analysis and the representation theory of $U(\infty)$. 

Nevertheless, the complete classification of extreme characters of $U(\infty)$ is known as the Edrei--Voiculescu theorem\footnote{In the representation theory, extreme characters of $U(\infty)$ correspond to finite factor representations of $U(\infty)$, or irreducible spherical representations of $(U(\infty)\times U(\infty), U(\infty))$. See \cite{Voiculescu76, O03} for more details.}. A complex continuous function $f$ on $U(\infty)$ is called a \emph{character} if 
\begin{itemize}
    \item (positive-definiteness) $[f(u_i^{-1}u_j)]_{i, j=1}^n$ is a positive-definite matrix for all $u_1, \dots, u_n$ in $U(\infty)$ and $n\geq 1$, 
    \item (centrality) $f(uv)=f(vu)$ for all $u, v\in G$,
    \item (normalization) $f(1)=1$.
\end{itemize}
By definition, the set $\mathrm{Ch}(U(\infty))$ of all characters of $U(\infty)$ is a convex set, and $\mathcal{E}(U(\infty))$ denotes the set of all extreme points in $\mathrm{Ch}(U(\infty))$. Then, every $f\in \mathcal{E}(U(\infty))$ has the form
\[f(u)=\prod_{z}\Phi_\omega(z),\]
where $z$ ranges over all eigenvalues of $u$ (i.e., all but finitely many of them are equal to 1), and
\[\Phi_\omega(z):=e^{\gamma^+(z-1)+\gamma^-(z^{-1}-1)}\prod_{j=1}^\infty \frac{1+\beta^+_j(z-1)}{1-\alpha^+_j(z-1)}\frac{1+\beta^-_j(z^{-1}-1)}{1-\alpha^-_j(z^{-1}-1)}\]
for the parameter $\omega=(\alpha^+, \alpha^-, \beta^+, \beta^-, \gamma^+, \gamma^-)\in (\mathbb{R}_{\geq 0}^\infty)^4\times \mathbb{R}_{\geq 0}^2$. Here, $\omega$ satisfies 
\[\alpha^\pm=(\alpha^\pm_1\geq \alpha^\pm_2\geq \dots), \quad \beta^\pm=(\beta^\pm_1\geq \beta^\pm_2\geq \cdots),\]
\[\sum_{i=1}^\infty(\alpha^\pm_i+\beta^\pm_i)<\infty, \quad \beta^+_1+\beta^-_1\leq 1.\]
Let $\Omega\subset (\mathbb{R}_{\geq 0}^\infty)^4\times (\mathbb{R}_{\geq 0})^2$ denote the set of all parameters $\omega$ satisfying the above conditions. Conversely, arbitrary $\omega\in \Omega$ gives an extreme character of $U(\infty)$, denoted by $f_\omega$, by the above formula. See \cite{BO12,Boyer92,VK82,Voiculescu76}.

The essential point in the above classification is that any extreme character of $U(\infty)$ can be \emph{approximated by} irreducible characters of the $U(N)$. Let us recall that every irreducible representation of $U(N)$ is determined, up to equivalence, by its highest weights. More explicitly, they can be parametrized by \emph{signatures} of length $N$, i.e., 
\[\widehat{U(N)}\cong \mathrm{Sign}_N:=\{\lambda=(\lambda_1\geq \cdots \geq \lambda_N)\in \mathbb{Z}^N\}.\]
For any $\lambda\in \mathrm{Sign}_N$, the irreducible character $f_\lambda$  is a function on $U(N)$ given as the normalized trace of the irreducible representation associated with $\lambda$. Moreover, we have 
\[f_\lambda(u)=\frac{s_\lambda(z_1, \dots, z_N)}{s_\lambda(1, \dots, 1)} \quad (u\in U(N)),\] 
where $z_1, \dots, z_N$ are eigenvalues of $u$, and $s_\lambda(z_1, \dots, z_N)$ is the \emph{Schur polynomial} given by 
\[s_\lambda(z_1, \dots, z_N):=\frac{\det\left[z_i^{\lambda_j+N-j}\right]_{i, j=1}^N}{\det\left[z_i^{N-j}\right]_{i, j=1}^N}.\]
Since the numerator in the right-hand side is skew-symmetric in $z_1, \dots, z_N$, the Schur polynomial $s_\lambda(z_1, \dots, z_N)$ is a Laurent polynomial in general, but it is a polynomial when $\lambda_N\geq 0$.

For every extreme character $f_\omega\in \mathcal{E}(U(\infty))$, the following approximation formula is known (see \cite{BO12,Boyer92,VK82}): there exists a sequence $(\lambda(N))_{N=1}^\infty\in \prod_{N=1}^\infty\mathrm{Sign}_N$ such that for any $n\geq 1$
\begin{equation}\label{eq:app_ext_char}
    f_\omega|_{U(n)}=\lim_{N\to \infty; N\geq n}f_{\lambda(N)}|_{U(n)}
\end{equation} 
holds, where the right-hand side converges uniformly on $U(n)$. Moreover, the parameter $\omega$ is given through the modified Frobenius coordinates of $\lambda{(N)}$ as follows: we suppose that
\[\lambda(N)=(\lambda^{+}_1(N), \lambda^{+}_2(N), \dots, -\lambda^{-}_2(N), -\lambda^{-}_1(N)),\]
where $\lambda^+_1(N), \lambda^+_2(N), \dots\geq 0$ and $\lambda^-_1(N), \lambda^-_2(N)\cdots>0$, i.e., $\lambda^+(N)=(\lambda^+_1(N), \lambda^+_2(N), \dots)$ and $\lambda^-(N)=(\lambda^-_1(N), \lambda^-_2(N), \dots)$ are positive and negative parts of $\lambda(N)$. Their \emph{modified Frobenius coordinates} are given by 
\[a^\pm_i(N):=\lambda^\pm_i(N)-i+\frac{1}{2}, \quad b^\pm_i(N):=\lambda^{\pm'}_i(N)-i+\frac{1}{2},\]
where $\lambda^{\pm '}(N)$ is the transposed Young diagram of $\lambda^\pm(N)$. Let $|\lambda^\pm(N)|:=\lambda^\pm_1(N)+\lambda^\pm_2(N)+\cdots$. Then, $\omega=(\alpha^+, \alpha^-, \beta^+, \beta^-, \gamma^+, \gamma^-)\in \Omega$ is given by 
\begin{equation}\label{eq:app_para}
    \lim_{N\to \infty}\frac{a^\pm_i(N)}{N}=\alpha^\pm_i, \quad \lim_{N\to \infty} \frac{b^\pm_i(N)}{N}=\beta^\pm_i, \quad \lim_{N\to\infty}\frac{|\lambda^\pm(N)|}{N}=\delta^\pm,
\end{equation}
\[\gamma^\pm=\delta^\pm-\sum_{i=1}^\infty(\alpha^\pm_i+\beta^\pm_i).\]
By \cite[Theorem 1.2]{OO98}, two convergences in Equations \eqref{eq:app_ext_char}, \eqref{eq:app_para} are equivalent. 

Let $N_1<N_2<\cdots$ be an increasing sequence tending to infinity. Following \cite{OO98}, we call $(\lambda(N_L))_{L=1}^\infty \in \prod_{L=1}^\infty \mathrm{Sign}_{N_L}$ a \emph{Vershik--Kerov sequence} (converging to $\omega$) if Equation \eqref{eq:app_para} holds.

\medskip
Let $\mathfrak{gl}_N$ be the complexification of the Lie algebra of $U(N)$ and $U(\mathfrak{gl}_N)$ its universal enveloping algebra, i.e., $U(\mathfrak{gl}_N)$ is a universal complex algebra generated by unit 1 and $E_{i, j}$ ($i, j=1, \dots, N$) satisfying that for all $i, j, k, l=1, \dots, N$,
\[E_{i, j}E_{k, l}-E_{k, l}E_{i, j}=\delta_{j, k}E_{i, l}-\delta_{l, i}E_{k, j}.\] 
We remark that every finite-dimensional representation of $U(N)$ extends to a representation of $U(\mathfrak{gl}_N)$, and the irreducibility is inherited. Namely, for any $\lambda \in \mathrm{Sign}_N$, the associated irreducible representation $(\pi_\lambda, V_\lambda)$ of $U(N)$ gives rise to an irreducible representation $(\widetilde\pi_\lambda, V_\lambda)$ of $U(\mathfrak{gl}_N)$.

\medskip
If $Z$ belongs to the center $Z(\mathfrak{gl}_N)$ of $U(\mathfrak{gl}_N)$, then for all $\lambda\in \mathrm{Sign}_N$ there exists a constant $f_Z(\lambda)\in \mathbb{C}$ such that $\widetilde\pi_\lambda(Z)=f_Z(\lambda)1_{V_\lambda}$, and it is known that $f_Z(\lambda)$ can be expressed as a shifted symmetric polynomial in $\lambda_1, \dots, \lambda_N$, i.e., $f_Z(\lambda)$ is symmetric in $\lambda_1+N-1, \lambda_2+N-2, \dots, \lambda_N$.

Let $\mathrm{Sign}_N^+:=\{\mu\in \mathrm{Sign}_N\mid \mu_N\geq 0\}$. For any $\mu\in \mathrm{Sign}_N^+$, the \emph{shifted Schur polynomial} $s^*_\mu(x_1,\dots, x_N)$ is defined in \cite{OO97} by 
\[s^*_\mu(x_1, \dots, x_N):=\frac{\det\left[(x_i+N-i)^{\downarrow \mu_j+N-j}\right]_{i, j=1}^N}{\det\left[(x_i+N-i)^{\downarrow N-j}\right]_{i, j=1}^N},\]
where 
\[x^{\downarrow k}:=\begin{dcases}
    x(x-1)\cdots (x-k+1) & k\geq 1, \\ 1 & k=0.
\end{dcases}\]
They play an important role in the representation theory of $U(N)$ and $U(\mathfrak{gl}_N)$. In fact, there exists a basis $\{\mathbb{S}_{\mu|N}\}_{\mu\in \mathrm{Sign}_N^+}$ of $Z(\mathfrak{gl}_N)$ such that $f_{\mathbb{S}_{\mu|N}}=s^*_\mu$, i.e.,
\[\widetilde\pi_\lambda(\mathbb{S}_{\mu|N})=s^*_\mu(\lambda)1_{V_\lambda} \quad (\lambda\in \mathrm{Sign}_N).\]
These elements $\{\mathbb{S}_{\mu|N}\}_{\mu\in \mathrm{Sign}_N^+}$ are called \emph{quantum immanants} (see \cite[Section 2]{OO97}).

\medskip
By \cite[Theorem 1.2]{OO98}, a sequence $(\lambda(N))_{L=1}^\infty$ of signatures is a Vershik--Kerov sequence if and only if for any $n\geq 1$ and $\mu\in \mathrm{Sign}_n^+$, the limit 
\[\lim_{N\to\infty}\frac{s^*_\mu(\lambda(N))}{N^{|\mu|}}\] 
exists. Moreover, by \cite[Theorem 3.1]{OO98}, this limit is equal to $s_\mu(\omega)$, which is given by the Jacobi--Trudi formula $s_\mu(\omega)=\det[h_{\mu_i-i+j}(\omega)]_{i, j=1}^n$ and the generating function 
\begin{equation}\label{eq:specialization}
    \sum_{k=0}^\infty h_k(\omega)t^k=\Phi_\omega(1+t).
\end{equation}

\subsection{Conservation operator processes derived from unitary groups}\label{sec:CLT_unitary_group}
In this section, we discuss conservation operator processes derived by irreducible representations of $U(\mathfrak{gl}_N)$. Throughout this section, we fix an increasing sequence $N_1<N_2<\cdots$ and a Vershik--Kerov sequence $(\lambda(N_L))_{L=1}^\infty \in \prod_{L=1}^\infty \mathrm{Sign}_{N_L}$. Moreover, $(\pi_
{N_L}, V_{N_L})$ denotes the irreducible representation of $U(N_L)$ associated with $\lambda(N_L)$. Let $\psi_{N_L}\in V_{N_L}$ be a unit vector. Similar to Section \ref{sec:general}, we set $(\psi_{N_L})_t:=\mathbbm{1}_{[0, t)}\otimes \psi_{N_L} \in \mathfrak{h}_{N_L}:=L^2(\mathbb{R}_{\geq 0}; V_{N_L})$, and $\langle \, \cdot\,\rangle_{N_L, t}$ and $\mathrm{Var}_{N_L, t}$ are defined in the same way.

Let us recall that $(\pi_{N_L}, V_{N_L})$ extends to an irreducible representation $(\widetilde\pi_{N_L}, V_{N_L})$ of $U(\mathfrak{gl}_{N_L})$. Thus, every $X\in U(\mathfrak{gl}_{N_L})$ gives a conservation operator process $(\Lambda_t(X))_{t\geq 0}$ by 
\[\Lambda_t(X):=\Lambda_t(1\otimes \widetilde \pi_{N_L}(X)).\] 
In particular, for any $Z\in Z(\mathfrak{gl}_{N_L})$, by Lemma \ref{lem:variance} and Equations \eqref{eq:inn_cons}, \eqref{eq:inn_cons2}, we have 
\[\langle \Lambda_t(Z)\rangle_{N_L, t}=f_Z(\lambda(N_L))t, \quad \mathrm{Var}_{\psi_{N_L, t}}(\Lambda_t(Z))=|f_Z(\lambda(N_L))|^2t.\]

\begin{remark}
    Let $Z_1, \dots, Z_m\in Z(\mathfrak{gl}_{N_L})$ be self-adjoint, that is, $f_{Z_1}(\lambda(N_L)), \dots, f_{Z_m}(\lambda(N_L))$ are real numbers. By Remark \ref{rem:indep_inc}, under the state $\langle \,\cdot\, \rangle_{N_L, t}$, the conservation operator processes $(\Lambda_t(Z_1))_{t\geq 0}, \dots, (\Lambda_t(Z_m))_{t\geq 0}$ have independent increments. Moreover, for any $0<t_1<\cdots <t_m$ and $u_1, \dots, u_m\in \mathbb{R}$, we have 
    \begin{align*}
        \left\langle e^{\mathrm{i}\sum_{j=1}^m u_j \Lambda_{t_j}(Z_j)}\right\rangle_{N_L, t_m}
        &=\prod_{k=1}^m \left\langle e^{\mathrm{i}\sum_{j=k}^m u_j \Lambda_{t_k-t_{k-1}}(Z_j)} \right\rangle_{N_L, t_k-t_{k-1}}\\
        &=\prod_{k=1}^m \left\langle \lambda (e^{\mathrm{i}\sum_{j=k}^m u_j(1\otimes Z_j)_{t_k-t_{k-1}}})\right\rangle_{N_L, t_k-t_{k-1}}\\
        &=\prod_{k=1}^m \exp\left((t_k-t_{k-1})\left(e^{\mathrm{i}\sum_{j=k}^m u_j f_{Z_j}(\lambda(N_L))}-1\right)\right),
    \end{align*}
    where $t_0:=0$ and $\lambda(\,\cdot\,)$ denotes the second quantization of a unitary operator on $\mathfrak{h}_{N_L}$. It implies that $(\Lambda_t(Z_1))_{t\geq 0}, \dots, (\Lambda_t(Z_m))_{t\geq 0}$ has the same time-ordered moments as the multivariate compound Poisson process with jumps of size $f_{Z_1}(\lambda(N_L)), \dots, f_{Z_m}(\lambda(N_L))$.
\end{remark}

We assume that the Vershik--Kerov sequence $(\lambda(N_L))_{L=1}^\infty$ converges to $\omega\in \Omega$ (see Equation \eqref{eq:app_para}) and $L/N_L\to 1$ as $L\to \infty$. As we discussed in the previous section, the following law of large numbers holds true:

\begin{proposition}\label{prop:LLN}
    For any integer partition $\mu$, we have 
    \[\lim_{L\to\infty}\frac{\langle \Lambda_t(\mathbb{S}_{\mu|N_L})\rangle_{N_L, t}}{N_L^{|\mu|}}=s_\mu(\omega)t,\]
    where the right-hand side is given by Equation \eqref{eq:specialization}.
\end{proposition}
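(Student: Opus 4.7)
The plan is to reduce the statement directly to the Okounkov--Olshanski characterization of Vershik--Kerov sequences via the expectation formula for conservation operators derived from central elements, which was recorded just before the proposition.

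Concretely, the first step is to write out $\langle \Lambda_t(\mathbb{S}_{\mu|N_L})\rangle_{N_L,t}$ explicitly. For any $L$ large enough that the length of $\mu$ does not exceed $N_L$, we have $\mathbb{S}_{\mu|N_L}\in Z(\mathfrak{gl}_{N_L})$, and since $(\widetilde\pi_{N_L},V_{N_L})$ is irreducible the defining property of quantum immanants gives $\widetilde\pi_{N_L}(\mathbb{S}_{\mu|N_L})=s^*_\mu(\lambda(N_L))\,1_{V_{N_L}}$. Applying Equation \eqref{eq:inn_cons} with $H=1\otimes \widetilde\pi_{N_L}(\mathbb{S}_{\mu|N_L})$ and $\psi=\varphi=(\psi_{N_L})_t$, and using $\|(\psi_{N_L})_t\|^2=t$ since $\psi_{N_L}$ is a unit vector, yields
\[\langle \Lambda_t(\mathbb{S}_{\mu|N_L})\rangle_{N_L,t}=s^*_\mu(\lambda(N_L))\cdot t.\]
This identity is essentially bookkeeping: it is the specialization to the quantum immanant of the formula $\langle \Lambda_t(Z)\rangle_{N_L,t}=f_Z(\lambda(N_L))t$ already stated in the excerpt.

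The second and final step is to divide by $N_L^{|\mu|}$ and pass to the limit. Since $(\lambda(N_L))_{L=1}^\infty$ is a Vershik--Kerov sequence converging to $\omega\in\Omega$, the combination of Theorems 1.2 and 3.1 of \cite{OO98}, which are quoted verbatim just above the proposition, gives
\[\lim_{L\to\infty}\frac{s^*_\mu(\lambda(N_L))}{N_L^{|\mu|}}=s_\mu(\omega),\]
and multiplying by $t$ concludes. There is no genuine obstacle: the proposition is a direct translation of the Vershik--Kerov approximation into the language of conservation operator expectations. In particular, the additional hypothesis $L/N_L\to 1$ plays no role here and will only become relevant later, when the CLT time scale couples the step index $L$ to the rank $N_L$.
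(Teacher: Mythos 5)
Your argument is correct and is exactly how the paper intends the proposition to follow: the expectation formula $\langle \Lambda_t(Z)\rangle_{N_L,t}=f_Z(\lambda(N_L))t$ specializes to $s^*_\mu(\lambda(N_L))t$ for the quantum immanant, and the quoted Theorems 1.2 and 3.1 of \cite{OO98} give $s^*_\mu(\lambda(N_L))/N_L^{|\mu|}\to s_\mu(\omega)$. Your side remark that the hypothesis $L/N_L\to 1$ is irrelevant here and only enters the CLT is also accurate.
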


By the same argument of Theorem \ref{thm:general_CLT}, we further obtain the following central limit theorem:
\begin{theorem}\label{thm:CLT_unitary}
    Let $\mu_1, \dots, \mu_m$ be integer partitions and
    \[X^{(i)}_{N_L, t}:=\Lambda_t(\mathbb{S}_{\mu_i|N_L}), \quad \widetilde X^{(i)}_{N_L, t}:=\frac{X^{(i)}_{N_L, t}-\langle X^{(i)}_{N_L, t}\rangle_{N_L, t}}{N_L^{|\mu_i|+1/2}} \quad (i=1, \dots, m).\]
    Then, the family of processes $(\widetilde X^{(1)}_{N_L, Lt})_{t\geq 0}, \dots , (\widetilde X^{(m)}_{N_L, Lt})_{t\geq 0}$ with scaled time parameter $Lt$ converges in the moment sense to a Gaussian family as $L\to \infty$, i.e.,
    \[\lim_{L\to \infty}\langle \widetilde X^{(1)}_{N_L, Lt}\cdots \widetilde X^{(m)}_{N_L, Lt}\rangle_{N_L, Lt}=\begin{dcases}
        \sum_{\pi \in \mathcal{P}_2(m)}\prod_{\{i, j\}\in \pi} ts_{\mu_i}(\omega)s_{\mu_j}(\omega) & m \text{ is even}, \\ 0 & m \text{ is odd}.
    \end{dcases}\]
    The covariance of the Gaussian family in the large $L$ limit is given by $ts_{\mu_i}(\omega)s_{\mu_j}(\omega)$ for every $i, j=1, \dots, m$.
\end{theorem}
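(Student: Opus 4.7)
My plan is to exploit the centrality of the quantum immanants to collapse the joint CLT to a single scalar CLT of Poisson type. Since $\mathbb{S}_{\mu_i|N_L}\in Z(\mathfrak{gl}_{N_L})$ and $(\widetilde{\pi}_{N_L}, V_{N_L})$ is irreducible, Schur's lemma gives $\widetilde{\pi}_{N_L}(\mathbb{S}_{\mu_i|N_L})=s^*_{\mu_i}(\lambda(N_L))\cdot 1_{V_{N_L}}$, hence
\[X^{(i)}_{N_L,t}=s^*_{\mu_i}(\lambda(N_L))\cdot Y_{N_L,t},\qquad Y_{N_L,t}:=\Lambda_t(1\otimes 1_{V_{N_L}}).\]
In particular all the $X^{(i)}_{N_L,Lt}$ are scalar multiples of the same self-adjoint operator $Y_{N_L,Lt}$, so they mutually commute and the joint moment factors completely.

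Using $\langle X^{(i)}_{N_L,t}\rangle_{N_L,t}=t\,s^*_{\mu_i}(\lambda(N_L))$, I will rewrite
\[\widetilde{X}^{(i)}_{N_L,Lt}=\frac{s^*_{\mu_i}(\lambda(N_L))}{N_L^{|\mu_i|}}\cdot\frac{Y_{N_L,Lt}-Lt}{N_L^{1/2}}.\]
The first factor converges to $s_{\mu_i}(\omega)$ by the Okounkov--Olshanski shifted-Schur asymptotic recalled at the end of Section \ref{sec:unitary_groups}. For the second factor, the remark immediately preceding this theorem (applied to $Z=1$, so $f_Z\equiv 1$) identifies the moment generating function of $Y_{N_L,Lt}$ with that of a Poisson variable of parameter $Lt$; combined with $L/N_L\to 1$, this gives convergence of the moments of $(Y_{N_L,Lt}-Lt)/N_L^{1/2}$ to the moments of $N(0,t)$, namely $(m-1)!!\,t^{m/2}$ when $m$ is even and $0$ when $m$ is odd.

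Multiplying the two limits and regrouping by pair partitions, using $(m-1)!!=|\mathcal{P}_2(m)|$ and the trivial identity $t^{m/2}\prod_{i=1}^m s_{\mu_i}(\omega)=\prod_{\{a,b\}\in \pi} t\,s_{\mu_a}(\omega)s_{\mu_b}(\omega)$ valid for every $\pi\in\mathcal{P}_2(m)$, yields exactly the claimed Wick-type formula.

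The main obstacle is conceptual rather than technical: once centrality collapses the $m$ processes to scalar multiples of a single Poisson-type operator, everything reduces to aligning the somewhat unusual normalization $N_L^{|\mu_i|+1/2}$ with the natural Poisson scaling $\sqrt{Lt}\,|s^*_{\mu_i}(\lambda(N_L))|$, and the rest is bookkeeping between \cite{OO98} and Gaussian moment combinatorics. If one prefers not to invoke the Poisson identification, the scalar CLT for $Y_{N_L,Lt}$ can instead be obtained by replaying the partition argument in the proof of Theorem \ref{thm:general_CLT} with time scaling $L$ (rather than $N$) and the single operator $1_{V_{N_L}}$, which is entirely parallel since the stationary independent increments (Remarks \ref{rem:indep_inc}, \ref{rem:stat_inc}) apply verbatim and $L/N_L\to 1$ absorbs the mismatch between the two scalings.
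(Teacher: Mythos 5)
Your proposal is correct, but it takes a genuinely different route from the paper. The paper proves Theorem \ref{thm:CLT_unitary} by replaying the proof of Theorem \ref{thm:general_CLT}: it splits $[0,Lt)$ into $L$ increments, uses the independence and stationarity of increments (Remarks \ref{rem:indep_inc}, \ref{rem:stat_inc}) to expand the joint moment over set partitions, kills the blocks with $|\pi|\neq m/2$ (singletons by centering, large blocks by the $N_L^{-m/2}$ prefactor together with $L/N_L\to 1$), and only uses the centrality of $\mathbb{S}_{\mu_i|N_L}$ to guarantee, via \cite{OO98}, that each block factor stays bounded and that the surviving pair covariances converge to $ts_{\mu_i}(\omega)s_{\mu_j}(\omega)$. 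You instead push centrality all the way: by Schur's lemma every $X^{(i)}_{N_L,t}$ is the deterministic scalar $s^*_{\mu_i}(\lambda(N_L))$ times the single number operator $Y_{N_L,t}=\Lambda_t(1)$, so the joint moment factorizes as $\prod_i\bigl(s^*_{\mu_i}(\lambda(N_L))/N_L^{|\mu_i|}\bigr)$ times the $m$-th moment of $(Y_{N_L,Lt}-Lt)/N_L^{1/2}$; the first factor converges by the Okounkov--Olshanski asymptotics, the second by the classical CLT for a Poisson$(Lt)$ variable (the coherent-state distribution of the number operator, exactly the $Z=1$ case of the remark preceding the theorem), and the identity $(m-1)!!=|\mathcal{P}_2(m)|$ regroups the limit into the stated Wick form. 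Your reduction is more elementary and makes transparent that the limiting Gaussian family is fully degenerate (all variables are scalar multiples of one Gaussian, matching the rank-one covariance $ts_{\mu_i}(\omega)s_{\mu_j}(\omega)$), whereas the paper's argument exhibits the result as an instance of the general machinery, which is what survives when the operators are no longer central. One point to make airtight: you need \emph{moment} convergence of the standardized Poisson, not just convergence in distribution; this is standard (the $k$-th cumulant of the standardized Poisson$(Lt)$ is $(Lt)^{1-k/2}$, so all cumulants beyond the second vanish in the limit), or it follows from your fallback of rerunning the partition argument for $Y_{N_L,Lt}$, so the gap is only one of explicitness.
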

\begin{proof}
    By the same discussion in the proof of Theorem \ref{thm:general_CLT}, we have 
    \begin{align*}
        &\langle \widetilde X^{(1)}_{N_L, Lt}\cdots \widetilde X^{(m)}_{N_L, Lt}\rangle_{N_L, Lt}\\
        &=\frac{1}{N_L^{m/2}}\sum_\pi L(L-1)\cdots (L-|\pi|+1)\prod_{\{i_1<\cdots<i_k\}\in \pi}\langle \underline{X}^{(i_1)}_{N_L, t}\cdots \underline{X}^{(i_k)}_{N_L, t}\rangle_{N_L, t},
    \end{align*}
    where $\underline{X}^{(i)}_{N_L, t}:=N_L^{1/2}\widetilde{X}^{(i)}_{N_L, t}$, and the summation is over all partitions of $\{1, \dots, m\}$ into at most $L$ blocks. Since the underlying operator $\widetilde \pi_N(\mathbb{S}_{\mu_i|N_L})$ of $X^{(i)}_{N_L, t}$ is a scalar operator on $V_{N_L}$, each factor $\langle \underline{X}^{(i_1)}_{N, t}\cdots \underline{X}^{(i_k)}_{N_L, t}\rangle_{N_L, t}$ is a polynomial of $s_{\mu_i}(\lambda(N_L))/N_L^{|\mu_i|}$ ($i=1, \dots, m$). Thus, it converges to a polynomial of $s_{\mu_i}(\omega)$. Since $L/N_L\to 1$ as $L\to\infty$, if $|\pi|<m/2$, the associated term converges to zero as $L\to \infty$. On the other hand, $\pi$ should have a singleton if $|\pi|>m/2$, and hence, the associated term vanishes. Thus, the remaining term has to satisfy $|\pi|=m/2$, i.e., $m$ is even, and $\pi$ is a pair partition. Finally, since $\lim_{L\to \infty}\langle \underline{X}^{(i)}_{N_L, t} \underline{X}^{(j)}_{N_L, t} \rangle_{N_L, t}=ts_{\mu_i}(\omega)s_{\mu_j}(\omega)$, we obtain the desired formula.
\end{proof}

%%%%%%%%%%%%%%%%%%%%%%%%%%%%%%%%%%%%%%%%%%%%%%%%%%%%%%%%%%%%%%%%%%%%%%%%%%%%%%%%%%%%%%%%%%%%%%%%%%%%%%%%%%%%%%%%%%%%%%%%%%%%%%%%%%%%%%%%%%%%%%%%%%
\subsection{Comments on the symmetric group case}
We can apply the same argument as in the previous section to the symmetric groups rather than unitary groups. Here, we mainly refer to the textbook \cite{BO:book} on the asymptotic representation theory of the symmetric groups.

Let $S(N)$ be the symmetric group of degree $N$. The \emph{infinite symmetric group} $S(\infty)$ is defined by $\varinjlim_N S(N)$ and naturally identified with the group of finite permutations on $\{1, 2, \dots\}$. The complete classification of extreme characters of $S(\infty)$ is well known as Thoma's theorem (see \cite[Corollary 4.2]{BO:book}), and they are parametrized by the Thoma simplex $\Delta$. Here, $\Delta$ is the set of $\omega=(\alpha, \beta)\in [0, 1]^\infty\times [0, 1]^\infty$ satisfying 
\[\alpha=(\alpha_1\geq \alpha_2\geq \cdots), \quad \beta=(\beta_1\geq \beta_2\geq \cdots),\]
\[\sum_{i=1}^\infty(\alpha_i+\beta^i)\leq 1.\]
Moreover, for any $\omega\in \Delta$, the associated extreme character, denoted by $\chi^\omega$, is given by 
\[\chi^\omega(\sigma)=\prod_{k=2}^\infty\left(\sum_{i=1}^\infty \alpha_i^k+(-1)^{k-1}\beta_i^k\right)^{m_k} \quad (\sigma\in S(\infty)),\]
where $m_k$ is the number of cycle permutations of length $k$ in the cycle decomposition of $\sigma$. We remark that cycle decomposition provides a correspondence between the conjugacy classes of $S(N)$ and the Young diagrams with $N$ boxes. Similarly, the conjugacy classes of $S(\infty)$ correspond to the set $\mathbb{Y}^\circ$ of Young diagrams $\rho=(\rho_1, \rho_2, \dots)$ such that $\rho_i\neq 1$ for all $i\geq 1$. For any $\rho\in \mathbb{Y}^\circ$, we denote by $\chi^\omega_\rho$ the value of $\chi^\omega$ on the associated conjugacy class.

Similar to Equation \eqref{eq:app_ext_char}, the approximation formula of extreme characters by irreducible characters of $S(N)$ is known as follows (see \cite[Theorem 6.16]{BO:book}): let us recall that all irreducible representations of $S(N)$ can be parametrized by the set of Young diagrams with $N$ boxes, i.e., 
\[\widehat{S(N)}\cong \mathbb{Y}_N:=\{\lambda=(\lambda_1\geq \lambda_2\geq\cdots)\in \mathbb{Z}^\infty_{\geq 0}\mid |\lambda|:=\lambda_1+\lambda_2+\cdots =N\}.\]
For every $\lambda\in \mathbb{Y}_N$ we denote by $\chi^\lambda$ the associated irreducible character of $S(N)$. Here, we normalize it by $\chi^\lambda(e)=1$. Then, for every extreme character $\chi$ of $S(\infty)$, there exists a sequence $(\lambda(N))_{N= 1}^\infty\in \prod_{N=1}^\infty \mathbb{Y}_N$ such that for all $n\geq 1$
\begin{equation}\label{eq:app_ext_char_S}
    \chi|_{S(n)}=\lim_{N\to\infty; N\geq n}\chi^{\lambda(N)}|_{S(n)}.
\end{equation}
Moreover, similarly to the unitary group case, the corresponding parameter $\omega=(\alpha, \beta)\in \Delta$ is given by the modified Frobenius coordinates of $\lambda(N)$. See Equation \eqref{eq:app_para}. In this case, we say that $(\lambda(N))_{N\geq 1}$ converges to $\omega$. Moreover, it is equivalent to Equation \eqref{eq:app_ext_char_S} with $\chi=\chi^\omega$.

\medskip
For any $\rho\in \mathbb{Y}^\circ$ with $|\rho|\leq N$ we obtain the Young diagram $\rho\cup (1^{N-|\rho|})$ with $N$ boxes by adding $N-|\rho|$ rows to $\rho$. We denote by $C_{\rho\cup \{1^{N-|\rho|}\}}$ the associated conjugacy class of $S(N)$ and define $A_{\rho|N} \in \mathbb{C}[S(N)]$ by
\[A_{\rho|N }:=\frac{1}{|C_{\rho\cup\{1^{N-|\rho|}\} }|}\sum_{g\in C_{\rho\cup\{1^{N-|\rho|}\}}}g.\]
By definition, $A_{\rho|N}$ belongs to the center $Z(\mathbb{C}[S(N)])$ of $\mathbb{C}[S(N)]$. Moreover, $\{A_{\rho|N}\}_{\rho \in \mathbb{Y}^\circ; |\rho|\leq N}$ form a basis of $Z(\mathbb{C}[S(N)])$. 

We now consider conservation operator processes derived from them. Let $N_1<N_2<\cdots$ be an increasing sequence tending to infinity and assume that a sequence $(\lambda(N_L))_{L\geq 1}$ of Young diagrams converges to $\omega\in \Delta$ in the sense of Equation \eqref{eq:app_para}. For every $L\geq 1$ we denote by $(\sigma_{N_L}, W_{N_L})$ the irreducible representation of $S(N_L)$ corresponding to $\lambda(N_L)$ and fix a unit vector $\psi_{N_L}\in W_{N_L}$. Moreover, for all $t\geq 0$ we define $(\psi_{N_L})_t\in \mathfrak{h}_{N_L}:=L^2(\mathbb{R}_{\geq 0}; W_{N_L})$ and $\langle\, \cdot\, \rangle_{N_L, t}$ by the same way in the previous section.

Since $(\sigma_{N_L}, W_{N_L})$ naturally extends to a representation of $\mathbb{C}[S(N_L)]$, every $A\in \mathbb{C}[S(N_L)]$ gives a conservation operator process $(\Lambda_t(A))_{t\geq 0}$ on $\mathcal{F}(\mathfrak{h}_{N_L})$ by $\Lambda_t(A):=\Lambda_t(1\otimes \sigma_{N_L}(A))$.

By definition, for every $A\in \mathbb{C}[S(N_L)]$, we have $\langle \Lambda_t(A)\rangle_{N_L, t}=t\chi^{\lambda(N_L)}(A)$. In particular, we have $\langle \Lambda_t(A_{\rho|N_L})\rangle_{N_L, t}=t\chi^{\lambda(N_L)}_{\rho\cup\{1^{N_L-|\rho|}\}}$ for any $\rho\in \mathbb{Y}^\circ$ with $|\rho|\leq N_L$.

Equation \eqref{eq:app_ext_char_S} implies the following law of large numbers:
\begin{proposition}
    For any $\rho\in \mathbb{Y}^\circ$ we have $\lim_{L\to \infty} \langle \Lambda_t(A_{\rho|N_L})\rangle_{N_L, t}=t\chi^\omega_\rho$.
\end{proposition}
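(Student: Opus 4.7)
The plan is to reduce the statement to the approximation formula \eqref{eq:app_ext_char_S}. Directly above the proposition the author records the identity $\langle \Lambda_t(A_{\rho|N_L})\rangle_{N_L,t}=t\chi^{\lambda(N_L)}_{\rho\cup\{1^{N_L-|\rho|}\}}$, so it suffices to prove $\lim_{L\to\infty}\chi^{\lambda(N_L)}_{\rho\cup\{1^{N_L-|\rho|}\}}=\chi^\omega_\rho$.

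To invoke \eqref{eq:app_ext_char_S}, I first fix an integer $n\geq |\rho|$ and choose a concrete permutation $\sigma\in S(n)$ whose non-trivial cycle lengths are exactly $\rho_1,\rho_2,\dots$, while the remaining $n-|\rho|$ letters are fixed by $\sigma$. Under the natural embedding $S(n)\subset S(N_L)$ for $N_L\geq n$, this $\sigma$ acquires only additional fixed points, so its $S(N_L)$-conjugacy class is precisely $C_{\rho\cup\{1^{N_L-|\rho|}\}}$; viewing $\sigma$ inside $S(\infty)$, its nontrivial cycle structure remains $\rho\in\mathbb{Y}^\circ$. Consequently $\chi^{\lambda(N_L)}(\sigma)=\chi^{\lambda(N_L)}_{\rho\cup\{1^{N_L-|\rho|}\}}$ and $\chi^\omega(\sigma)=\chi^\omega_\rho$.

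Applying \eqref{eq:app_ext_char_S} with $\chi=\chi^\omega$ evaluated at this particular $\sigma\in S(n)$ then yields $\chi^\omega_\rho=\chi^\omega(\sigma)=\lim_{L\to\infty}\chi^{\lambda(N_L)}(\sigma)=\lim_{L\to\infty}\chi^{\lambda(N_L)}_{\rho\cup\{1^{N_L-|\rho|}\}}$, after which multiplying by $t$ produces the claim.

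There is no genuine obstacle here; the one point requiring care is checking that the same test element $\sigma$ simultaneously computes $\chi^\omega_\rho$ and every $\chi^{\lambda(N_L)}_{\rho\cup\{1^{N_L-|\rho|}\}}$ for $L$ large, i.e.\ that the conjugacy-class parametrizations of $S(N_L)$ and $S(\infty)$ by $\mathbb{Y}^\circ$ are compatible under the inductive limit. Once that compatibility is recorded, the proposition is essentially a restatement of the character approximation theorem, and in particular the finer Frobenius-coordinate information encoded in \eqref{eq:app_para} plays no direct role.
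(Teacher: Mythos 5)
Your argument is correct and is essentially the paper's own (implicit) proof: the paper records $\langle \Lambda_t(A_{\rho|N_L})\rangle_{N_L, t}=t\chi^{\lambda(N_L)}_{\rho\cup\{1^{N_L-|\rho|}\}}$ and then deduces the proposition directly from the character approximation \eqref{eq:app_ext_char_S}, exactly as you do by evaluating at a fixed permutation of cycle type $\rho$ under the embeddings $S(n)\subset S(N_L)\subset S(\infty)$. The only caveat is your closing remark: the standing hypothesis is convergence of the modified Frobenius coordinates as in \eqref{eq:app_para}, so \eqref{eq:app_para} does enter, via the equivalence with \eqref{eq:app_ext_char_S} that the paper records, which is what licenses your appeal to \eqref{eq:app_ext_char_S} in the first place.
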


Moreover, as in Theorem \ref{thm:CLT_unitary}, the following central limit theorem holds true.
\begin{theorem}
    Let $\rho_1, \dots, \rho_m\in \mathbb{Y}^\circ$ and 
    \[X^{(i)}_{N_L, t}:=\Lambda_t(A_{\rho_i|N_L}), \quad \widetilde{X}^{(i)}_{N_L, t}:=\frac{X^{(i)}_{N_L, t}-\langle X^{(i)}_{N_L, t}\rangle_{N_L, t}}{N_L^{1/2}}.\]
    Then, the family of processes $(\widetilde X^{(1)}_{N_L, Lt})_{t\geq 0}, \dots , (\widetilde X^{(m)}_{N_L, Lt})_{t\geq 0}$ with scaled time $Lt$ converges in the moment sense to a Gaussian family as $L\to \infty$, i.e.,
    \[\lim_{L\to \infty}\langle \widetilde X^{(1)}_{N_L, Lt}\cdots \widetilde X^{(m)}_{N_L, Lt}\rangle_{N_L, Lt}=\begin{dcases}
        \sum_{\pi \in \mathcal{P}_2(m)}\prod_{\{i, j\}\in \pi}t\chi^\omega_{\rho_i} \chi^\omega_{\rho_j} & m \text{ is even}, \\ 0 & m \text{ is odd}.
    \end{dcases}\]
    The covariance of the Gaussian family in the large $L$ limit is given by $t\chi^\omega_{\rho_i} \chi^\omega_{\rho_j}$ for every $i, j=1, \dots, m$.
\end{theorem}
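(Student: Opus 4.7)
The plan is to imitate the proof of Theorem \ref{thm:CLT_unitary} essentially line by line, leveraging the fact that each $A_{\rho_i|N_L}$ lies in the center $Z(\mathbb{C}[S(N_L)])$. First, since $(\sigma_{N_L}, W_{N_L})$ is irreducible, Schur's lemma gives $\sigma_{N_L}(A_{\rho_i|N_L}) = c_i(N_L) \cdot 1_{W_{N_L}}$ with $c_i(N_L) := \chi^{\lambda(N_L)}_{\rho_i \cup \{1^{N_L - |\rho_i|}\}}$, so that $1 \otimes \sigma_{N_L}(A_{\rho_i|N_L})$ is a scalar multiple of the identity on $\mathfrak{h}_{N_L}$. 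This plays exactly the structural role of the scalar eigenvalues $s^*_{\mu_i}(\lambda(N_L))$ of the quantum immanants in the proof of Theorem \ref{thm:CLT_unitary}, and the same decomposition into stationary independent increments yields
\[\widetilde{X}^{(i)}_{N_L, Lt} = \frac{1}{N_L^{1/2}} \sum_{n=0}^{L-1}\bigl(X^{(i)}_{N_L, [nt,(n+1)t)} - t\, c_i(N_L)\bigr).\]

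Next I would expand the joint moment using Remarks \ref{rem:indep_inc} and \ref{rem:stat_inc} as
\[\frac{1}{N_L^{m/2}} \sum_\pi L(L-1)\cdots(L-|\pi|+1) \prod_{B \in \pi} M_B(N_L),\]
where $M_B(N_L) := \bigl\langle \prod_{i \in B}(X^{(i)}_{N_L, t} - t\, c_i(N_L))\bigr\rangle_{N_L, t}$ and the sum runs over set partitions of $\{1, \dots, m\}$ into at most $L$ blocks. Because the underlying $\sigma_{N_L}(A_{\rho_i|N_L})$'s are scalars, each $M_B(N_L)$ factors as $\bigl(\prod_{i \in B} c_i(N_L)\bigr)\,\langle (\Lambda(p([0,t))) - t)^{|B|}\rangle_{(\psi_{N_L})_t}$; the centered number-operator moment on the right is the centered moment of a Poisson$(t)$ variable, and hence depends only on $t$, while $c_i(N_L) \to \chi^\omega_{\rho_i}$ by Equation \eqref{eq:app_ext_char_S}. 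In particular each $M_B(N_L)$ is bounded uniformly in $L$, and vanishes whenever $B$ is a singleton.

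The combinatorial bookkeeping then closes the argument exactly as for Theorem \ref{thm:CLT_unitary}: partitions with $|\pi| > m/2$ must contain a singleton and drop out; those with $|\pi| < m/2$ are killed by $L^{|\pi|}/N_L^{m/2}$ since $L/N_L \to 1$; only pair partitions $\pi \in \mathcal{P}_2(m)$ survive, and a direct application of Equation \eqref{eq:inn_cons2} gives $\langle (\Lambda(p([0,t))) - t)^2\rangle_{(\psi_{N_L})_t} = t$, so each pair $\{i,j\}$ contributes $t\,\chi^\omega_{\rho_i}\chi^\omega_{\rho_j}$ in the limit, matching the claimed formula.

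The only subtle point, rather than a real obstacle, is noticing that the normalization carries no $|\rho_i|$ exponent (unlike Theorem \ref{thm:CLT_unitary}), which reflects that the normalized characters $\chi^\omega_\rho$ are $O(1)$ whereas the shifted Schur polynomials $s^*_\mu$ grow like $N^{|\mu|}$; equivalently, the scalars $c_i(N_L)$ already have a finite limit without further rescaling. Once this is understood, the proof is a direct transcription of the unitary case.
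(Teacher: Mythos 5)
Your proposal is correct and follows essentially the same route as the paper, which proves this theorem by transcribing the argument of Theorem \ref{thm:CLT_unitary}: Schur's lemma makes the central elements act as the scalars $\chi^{\lambda(N_L)}_{\rho_i\cup\{1^{N_L-|\rho_i|}\}}$, the increment decomposition and Remarks \ref{rem:indep_inc}, \ref{rem:stat_inc} give the partition expansion, and only pair partitions survive under $L/N_L\to 1$, with the pair covariance $t\,\chi^\omega_{\rho_i}\chi^\omega_{\rho_j}$ coming from Equation \eqref{eq:inn_cons2} and Equation \eqref{eq:app_ext_char_S}. Your extra observation that the centered factors are multiples of the centered number operator, hence centered Poisson$(t)$ moments independent of $L$, is a clean way to justify the uniform boundedness the argument needs, but it is a repackaging rather than a different method.
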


As we mentioned, the approximation of extreme characters of $S(\infty)$ (see Equation \eqref{eq:app_ext_char_S}) is equivalent to the convergence of the associated modified Frobenius coordinates. In the literature, CLTs for extreme characters of $S(\infty)$ have been provided in \cite{Meliot} (also for the infinite Hecke algebra) and in \cite{Bufetov12}. There are also works developing quantum probabilistic approach (quantum decompositions on Fock spaces) for the CLT-type results (and these Jack deformations) in \cite{HO:book}. Compared with these previous results, our result establishes a CLT for time-parametrized linear statistics of central elements $A_{\rho|N}$ in $\mathbb{C}[S(N)]$ within the framework of conservation operator processes. Here, independent and stationary increments play an essential role. The limiting Gaussian covariance is given by $t\chi^\omega_{\rho_i}\chi^\omega_{\rho_j}$.

%%%%%%%%%%%%%%%%%%%%%%%%%%%%%%%%%%%%%%%%%%%%%%%%%%%%%%%%%%%%%%%%%%%%%%%%%%%%%%%%%%%%%%%%%%%%%%%%%%%%%%%%%%%%%%%%%%%%%%%%%%%%%%%%%%%%%%%%%%%%%%%%%%
\section{Conservation operator processes derived from quantum unitary groups}\label{sec:quantum_unitary_groups}
\subsection{Asymptotic representation theory of quantum unitary groups}

In the previous section, we studied conservation operator processes from unitary groups. Now, we turn to the case of \emph{quantum} unitary groups. Similar to the previous section, we need the results of the asymptotic representation theory for quantum unitary groups. See \cite{Gorin12,Sato1,Sato3}.

\medskip
Throughout the paper, we assume that a quantization parameter $q$ is in $(0, 1)$. Let $U_q(\mathfrak{gl}_N)$ denote the quantum universal enveloping algebra associated with $\mathfrak{gl}_N$. See \cite[Section 6.1]{KS:book} for the definition. It is well known that $U_q(\mathfrak{gl}_N)$ has the same representation theory as $U(\mathfrak{gl}_N)$. More precisely, any type-1 irreducible representations of $U_q(\mathfrak{gl}_N)$ precisely correspond to $\mathrm{Sign}_N$. Furthermore, for every $\lambda\in \mathrm{Sign}_N$, the associated irreducible representation, denoted by $(T_\lambda, V_\lambda)$, has the same dimension as the irreducible representation $(\tilde \pi_\lambda, V_\lambda)$ of $U(\mathfrak{gl}_N)$.

Let $Z_q(\mathfrak{gl}_N)$ denote the center of $U_q(\mathfrak{gl}_N)$. Since $(T_\lambda, V_\lambda)$ is irreducible, for any $Z\in Z_q(\mathfrak{gl}_N)$ its representation $T_\lambda(Z)$ is a scalar operator. Moreover, such a scalar is described by a symmetric polynomial in $q^{2\lambda_1}, q^{2(\lambda_2-1)}, \dots, q^{2(\lambda_N-N+1)}$, and it coincides with the Harish-Chandra image of $Z$. Similar to the previous section, we are interested in the asymptotic behavior of those constants as $N\to \infty$.

Before that, we introduce the factorial Schur polynomials. Let $a=(a_j)_{j=1}^\infty$ be a sequence of parameters. For any $\mu\in \mathrm{Sign}_N^+$, the \emph{factorial Schur polynomial} $s_\mu(x_1, \dots, x_N | a)$ is defined by 
\[s_\mu(x_1, \dots, x_N | a):=\frac{\det\left[(x_i | a)^{\mu_j+N-j}\right]_{i, j=1}^N}{\det\left[(x_i | a)^{N-j}\right]_{i, j=1}^N},\]
where 
\[(x| a)^k:=\begin{dcases}(x+a_1)\cdots (x+a_k) & k\geq 1, \\ 1 & k=0.\end{dcases}\]
For instance, we have $s_\mu(x_1, \dots, x_N | a)= s^*_\mu(x_1, \dots, x_N)$ if $a=(-j+1)_{j=1}^\infty$. Following \cite{Gorin12}, for any $\mu\in \mathrm{Sign}_N^+$ the \emph{$q$-interpolation Schur polynomial} $s^*_\mu(x_1, \dots, x_N; q)$ is defined by
\[s^*_\mu(x_1, \dots, x_N; q):=s_\mu(x_1, \dots, x_N| (-q^{j-N})_{j=1}^\infty).\]

For fixed complex parameter $a$, the factorial Schur polynomials also form a basis of the $\mathbb{C}$-algebra of symmetric polynomials in $x_1, \dots, x_N$ . Therefore, there exists a basis $\{\widehat{\mathbb{S}}^{(q)}_{\mu|N}\}_{\mu\in \mathrm{Sign}_N^+}$ of $Z_q(\mathfrak{gl}_N)$ such that for every $\lambda\in \mathrm{Sign}_N$ we have
\[T_\lambda(\widehat{\mathbb{S}}^{(q)}_{\mu|N})=s^*_\mu(q^{2\lambda_1}, q^{2(\lambda_2-1)} \dots, q^{2(\lambda_N-N+1)}; q^2).\]
Notably, another basis of $Z_q(\mathfrak{gl}_N)$ was discussed in \cite{JLM}, and their irreducible representations are described by the factorial Schur polynomials with parameter $a=(zq^{-2(j-1)})_{j=1}^\infty$ and $z\in \mathbb{C}$.

\medskip
We now investigate the asymptotic behavior of irreducible representations of $Z_q(\mathfrak{gl}_N)$. Let $N_1<N_2<\cdots$ be an increasing sequence tending to infinity. Following \cite{Gorin12}, we say that $(\lambda(N_L))_{L=1}^\infty\in \prod_{L=1}^\infty \mathrm{Sign}_{N_L}$ \emph{stabilize} to $\nu=(\nu_1\leq \nu_2\leq\cdots)\in \mathbb{Z}^\infty$ if for every $j\geq 1$
\[\lim_{L\to \infty} \lambda(N_L)_{N_L+1-j}=\nu_j.\]

Let $\delta_n:=(0, 1, \dots, n-1)$. Moreover, we define $q^\alpha:=(q^{\alpha_1}, \dots, q^{\alpha_n})$ for any $\alpha=(\alpha_1, \cdots, \alpha_n)$. 

\medskip
The following is essentially proved in \cite{Gorin12}:
\begin{proposition}\label{prop:asymptotics_q_immanants}
    If $(\lambda(N_L))_{L=1}^\infty\in \prod_{L=1}^\infty \mathrm{Sign}_{N_L}$ stabilize to $\nu=(\nu_1\leq \nu_2\leq\cdots)\in \mathbb{Z}^\infty$, then for any integer partition $\mu$ with length $n$, 
    \[\lim_{L\to \infty}\frac{s^*_\mu(q^{2(\lambda(N_L)-\delta_{N_L})}; q^2)}{q^{-2(N_L-1)|\mu|}}=\frac{s^*_\mu(q^{2\mu}; q^2)}{s^*_\mu(q^{2(\mu-\delta_n)}; q^2)}s_\mu(\omega_\nu),\]
    where $s_\mu(\omega_\nu)$ is determined by the Jacobi--Trudi formula $s_\mu(\omega_\nu)=\det[h_{\mu_i-i+j}(\omega_\nu)]_{i, j=1}^n$ and the generating function  
    \[\sum_{k=0}^\infty h_k(\omega_\nu)t^k = \frac{\prod_{j=0}^\infty(1-q^{2j}t)}{\prod_{j=1}^\infty(1-q^{2(\nu_j+j-1)t})}.\]
\end{proposition}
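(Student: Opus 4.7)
The strategy is to identify the claim as a direct consequence of the asymptotic analysis of $q$-deformed characters of the unitary group carried out in [Gorin12], combined with a careful bookkeeping of the normalization factors that arise when passing from the factorial (i.e., $q$-interpolation) Schur polynomial basis to the ordinary Schur polynomial basis. Concretely, the left-hand side is (up to the explicit rescaling $q^{-2(N_L-1)|\mu|}$) the eigenvalue of the quantum immanant $\widehat{\mathbb{S}}^{(q)}_{\mu|N_L}$ on the irreducible module $V_{\lambda(N_L)}$, and Gorin's framework controls the asymptotics of such eigenvalues under stabilization of signatures.

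First, I would start from the determinantal definition of $s^*_\mu(q^{2(\lambda(N_L)-\delta_{N_L})}; q^2)$ as a specific factorial Schur polynomial with parameters $a_j = -q^{2(j-N_L)}$. Each factor $(x_i\mid a)^k = \prod_{l=1}^k (x_i + a_l)$ with $x_i = q^{2(\lambda_i - i + 1)}$ can be rewritten by pulling out $q^{-2(N_L - 1)}$ from every factor, since the dominant balance comes from $(-a_l) = q^{2(l-N_L)}$. This extracts the overall scaling $q^{-2(N_L-1)|\mu|}$ and reduces the limit in question to a limit of normalized \emph{ordinary} Schur polynomial evaluations on a $q$-geometric specialization tied to $\lambda(N_L)$ and $\mu$, a setting to which Gorin's convergence theory applies directly.

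Second, I would invoke the asymptotic theorem of [Gorin12] for normalized Schur polynomial evaluations under stabilization of signatures: if $\lambda(N_L)$ stabilizes to $\nu$, the limit is a specialization $s_\mu(\omega_\nu)$ of the Schur polynomial, where $\omega_\nu$ is the point in the space of symmetric function specializations whose generating function for $h_k(\omega_\nu)$ is the claimed ratio of infinite products. The Jacobi--Trudi determinant $s_\mu(\omega_\nu) = \det[h_{\mu_i - i + j}(\omega_\nu)]_{i,j=1}^n$ then follows automatically because Jacobi--Trudi holds for every specialization of the symmetric function algebra. The correction factor $s^*_\mu(q^{2\mu}; q^2)/s^*_\mu(q^{2(\mu-\delta_n)}; q^2)$ arises as the ``conversion constant'' between the factorial normalization (natural for $\widehat{\mathbb{S}}^{(q)}_{\mu|N_L}$) and the ordinary Schur normalization (natural for Gorin's limit), measuring the discrepancy between the values of $s^*_\mu$ at the extremal point $q^{2\mu}$ and at $q^{2(\mu-\delta_n)}$.

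The principal technical obstacle is the precise tracking of normalizations: both the global scaling $q^{-2(N_L-1)|\mu|}$ and the finite correction ratio must be identified unambiguously, including the shifts in the $q$-exponents. In particular, verifying that Gorin's limit matches the stated generating function $\prod_{j\geq 0}(1-q^{2j}t)/\prod_{j\geq 1}(1-q^{2(\nu_j + j - 1)}t)$ requires checking that the numerator captures the infinite block of ``trivial'' coordinates of the stabilized signature while the denominator captures the stabilized values $\nu_j$, with the correct shifts by $j-1$ in the exponents. Once this identification is confirmed, the proof concludes by applying Jacobi--Trudi to express $s_\mu(\omega_\nu)$ as the claimed determinant.
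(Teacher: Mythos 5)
Your high-level strategy (``this should follow from Gorin's asymptotics plus normalization bookkeeping'') points at the right source, but the concrete mechanism you propose has a genuine gap. Your first step claims that one can pull $q^{-2(N_L-1)}$ out of every factor $(x_i\mid a)^k$ in the determinantal formula by a ``dominant balance'' and thereby reduce the statement to ``normalized ordinary Schur polynomial evaluations \ldots\ to which Gorin's convergence theory applies directly.'' Neither half of this is justified. The entries of the determinants are products of up to $\mu_j+N_L-j$ factors $q^{2(\lambda_i-i+1)}-q^{2(l-N_L)}$ of wildly varying magnitude (and the same is true of the Vandermonde-type denominator), so a term-by-term extraction of the scaling is not a routine manipulation; more importantly, Gorin's convergence theorem is not about evaluations of a fixed $s_\mu$ (or $s^*_\mu$) at points determined by the growing signature $\lambda(N_L)$. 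It is a statement about normalized characters $s_{\lambda(N_L)}(x_1,\dots,x_n,q^{-2n},\dots,q^{-2(N_L-1)})/s_{\lambda(N_L)}(q^{-2\delta_{N_L}})$, in which the large signature sits in the \emph{index} and only finitely many variables are free. There is no off-the-shelf theorem in \cite{Gorin12} of the form you invoke in your second step, so the existence of the limit and its identification with $s_\mu(\omega_\nu)$ are exactly what remains to be proved.

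The missing bridge, which is how the paper argues, is the $q$-binomial formula (\cite[Equation (17)]{Gorin12}): it expands the normalized character above as a series in the $q$-interpolation Schur polynomials $s^*_\mu(x_1,\dots,x_n;q^2)$ whose coefficients are precisely $s^*_\mu(q^{2(\lambda(N_L)-\delta_{N_L})};q^2)\big/\bigl(q^{-2(N_L-1)|\mu|}s^*_\mu(q^{2(\mu-\delta_n)};q^2)s_\mu(q^{2\delta_{N_L}})\bigr)$. One then quotes \cite[Theorem 1.3(1), Proposition 5.9]{Gorin12} for the uniform limit of the characters under stabilization (a sum of $(-1)^{|\mu|}q^{2(n(\mu)-n(\mu'))}s_\mu(\omega_\nu)s^*_\mu(x;q^2)$), and \cite[Proposition 6.3]{Gorin12} to upgrade uniform convergence on the torus to convergence of each coefficient. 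Finally, the correction ratio in the statement does not appear as an abstract ``conversion constant'' between bases: the factor $s^*_\mu(q^{2(\mu-\delta_n)};q^2)$ is already present in the binomial expansion, and $s^*_\mu(q^{2\mu};q^2)$ enters through the explicit identity $\lim_{L\to\infty}s^*_\mu(q^{2\mu};q^2)\,s_\mu(q^{2\delta_{N_L}})=(-1)^{|\mu|}q^{2(n(\mu')-n(\mu))}$, which cancels the sign and $q$-power prefactor from Gorin's expansion. Without the binomial formula, the coefficient-extraction step, and this last identity, your outline does not yet constitute a proof.
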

\begin{proof}
    By \cite[Theorem 1.3(1), Proposition 5.9]{Gorin12}, for any $n\geq 1$ we have
    \begin{align*}
        &\lim_{L\to \infty; N_L\geq n}\frac{s_{\lambda(N_L)}(x_1, \dots, x_n, q^{-2n}, \dots, q^{-2(N_L-1)})}{s_{\lambda(N_L)}(q^{-2\delta_N})}\\
        &=\sum_{\mu\in \mathrm{Sign}_{N_L}^+}(-1)^{|\mu|}q^{2(n(\mu)-n(\mu'))}s_\mu(\omega_\nu)s^*_\mu(x_1, \dots, x_n; q^2),
    \end{align*}
    where $\mu'$ is the transposed Young diagram of $\mu$, and $n(\mu):=\sum_{j=1}^n(j-1)\mu_j$. Here, the left-hand side converges uniformly on $\{(x_1, \dots, x_n)\in \mathbb{C}^n\mid |x_j|=q^{2(1-j)}\, (j=1, \dots, n)\}$. By the binomial formula (see \cite[Equation (17)]{Gorin12}), we have
    \begin{align*}
        &\frac{s_{\lambda(N_L)}(x_1, \dots, x_n, q^{-2n}, \dots, q^{-2(N_L-1)})}{s_{\lambda(N_L)}(q^{-2\delta_{N_L}})}\\
        &=\sum_{\mu\in \mathrm{Sign}_{N_L}^+}\frac{s^*_\mu(q^{2(\lambda(N_L)-\delta_{N_L})}; q^2)}{q^{-2(N_L-1)|\mu|}s^*_\mu(q^{2(\mu-\delta_n)}; q^2)}\frac{s^*_\mu(x_1, \dots, x_n; q^2)}{s_\mu(q^{2\delta_{N_L}})}.
    \end{align*}
    Thus, \cite[Proposition 6.3]{Gorin12}, the convergence of each coefficient occurs, that is,
    \[\lim_{L\to \infty; N_L\geq n}\frac{s^*_\mu(q^{2(\lambda(N_L)-\delta_{N_L})}; q^2)}{q^{-2(N_L-1)|\mu|}}\frac{1}{s^*_\mu(q^{2(\mu-\delta_n)}; q^2)s_\mu(q^{2\delta_{N_L}})}=(-1)^{|\mu|}q^{n(\mu)-n(\mu')}s_\mu(\omega_\nu).\]
    The assertion follows from $\lim_{L\to \infty}s^*_\mu(q^{2\mu}; q^2)s_\mu(q^{2\delta_{N_L}})=(-1)^{|\mu|}q^{2(n(\mu')-n(\mu))}$ (see \cite[Proof of Proposition 5.9]{Gorin12}).
\end{proof}

%%%%%%%%%%%%%%%%%%%%%%%%%%%%%%%%%%%%%%%%%%%%%%%%%%%%%%%%%%%%%%%%%%%%%%%%%%%%%%%%%%%%%%%%%%%%%%%%%%%%%%%%%%%%%%%%%%%%%%%%%%%%%%%%%%%%%%%%%%%%%%%%%%
\subsection{Conservation operator processes derived from quantum unitary groups}

Here, we demonstrate that our result in Theorem \ref{thm:general_CLT} is also applicable to the quantum group case.

Throughout this section, we fix an increasing sequence $N_1<N_2<\cdots$ tending to infinity and a sequence $(\lambda(N_L))_{L=1}^\infty\in \prod_{L=1}^\infty \mathrm{Sign}_{N_L}$ that stabilize to $\nu=(\nu_1\leq \nu_2\leq \cdots)\in \mathbb{Z}^\infty$. For every $L\geq 1$, we denote by $(T_{N_L}, V_{N_L})$ the irreducible representation of $U_q(\mathfrak{gl}_{N_L})$ associated with $\lambda(N_L)$. Similar to Section \ref{sec:CLT_unitary_group}, let $\psi_{N_L}\in V_{N_L}$ be a unit vector, and $\langle \,\cdot\,\rangle_{N_L, t}$ is defined in the same way in Section \ref{sec:CLT_unitary_group}.

For any integer partition $\mu$ with length $n$, we define $Z^{(q)}_{\mu|N_L}\in Z_q(\mathfrak{gl}_{N_L})$ by 
\[Z^{(q)}_{\mu|N_L}:=\frac{s^*_\mu(q^{2(\mu-\delta_n)}; q^2)}{s^*_\mu(q^{2\mu}; q^2)}\widehat{\mathbb{S}}^{(q)}_{\mu|N_L}.\]
By Proposition \ref{prop:asymptotics_q_immanants}, we obtain the following law of large numbers:
\begin{proposition}
    For any integer partition $\mu$, we have 
    \[\lim_{L\to \infty}\frac{\langle \Lambda_t(Z^{(q)}_{\mu|N_L})\rangle_{N_L, t}}{q^{-2(N_L-1)|\mu|}}=s_\mu(\omega_\nu)t.\]
\end{proposition}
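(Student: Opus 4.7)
The plan is to assemble this law of large numbers directly from two ingredients that have already been set up in the paper: first, the fact that a central element of $U_q(\mathfrak{gl}_{N_L})$ acts in an irreducible representation as a scalar which, by the same elementary computation as in Section \ref{sec:CLT_unitary_group}, gives the expectation under $\langle\,\cdot\,\rangle_{N_L,t}$ up to a factor of $t$; and second, the asymptotic formula for $q$-interpolation Schur polynomials in Proposition \ref{prop:asymptotics_q_immanants}. The role of the scalar factor $s^*_\mu(q^{2(\mu-\delta_n)};q^2)/s^*_\mu(q^{2\mu};q^2)$ built into the definition of $Z^{(q)}_{\mu|N_L}$ is precisely to cancel the extra ratio that appears in that asymptotic formula, so that a clean $s_\mu(\omega_\nu)$ survives in the limit.

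First I would observe that $\widehat{\mathbb{S}}^{(q)}_{\mu|N_L}$ lies in $Z_q(\mathfrak{gl}_{N_L})$ and acts on $V_{N_L}$ as the scalar $s^*_\mu(q^{2(\lambda(N_L)-\delta_{N_L})};q^2)$ by the defining property recalled in Section \ref{sec:quantum_unitary_groups}, so that $Z^{(q)}_{\mu|N_L}$ acts on $V_{N_L}$ as the scalar
\[
c_{\mu,L} \;:=\; \frac{s^*_\mu(q^{2(\mu-\delta_n)};q^2)}{s^*_\mu(q^{2\mu};q^2)}\, s^*_\mu\bigl(q^{2(\lambda(N_L)-\delta_{N_L})};q^2\bigr).
\]
Then $1\otimes T_{N_L}(Z^{(q)}_{\mu|N_L}) = c_{\mu,L}\cdot\mathrm{id}$ on $\mathfrak{h}_{N_L}=L^2(\mathbb{R}_{\geq 0};V_{N_L})$, so by Equation \eqref{eq:inn_cons} applied to $H=c_{\mu,L}\,p([0,t))$ (or equivalently by Lemma \ref{lem:variance}),
\[
\langle \Lambda_t(Z^{(q)}_{\mu|N_L})\rangle_{N_L,t} \;=\; t\,c_{\mu,L}.
\]

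Next I would divide by $q^{-2(N_L-1)|\mu|}$ and invoke Proposition \ref{prop:asymptotics_q_immanants} on the factor $s^*_\mu(q^{2(\lambda(N_L)-\delta_{N_L})};q^2)$, which yields
\[
\lim_{L\to\infty}\frac{s^*_\mu(q^{2(\lambda(N_L)-\delta_{N_L})};q^2)}{q^{-2(N_L-1)|\mu|}}
\;=\;\frac{s^*_\mu(q^{2\mu};q^2)}{s^*_\mu(q^{2(\mu-\delta_n)};q^2)}\,s_\mu(\omega_\nu).
\]
Multiplying by the $L$-independent constant $s^*_\mu(q^{2(\mu-\delta_n)};q^2)/s^*_\mu(q^{2\mu};q^2)$ produces exactly $s_\mu(\omega_\nu)$, and the factor of $t$ is preserved, giving the claimed limit $ts_\mu(\omega_\nu)$.

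There is essentially no obstacle: the statement is a repackaging of Proposition \ref{prop:asymptotics_q_immanants} through the expectation formula for scalar-valued conservation operators. The only thing to be careful about is to observe that $\widehat{\mathbb{S}}^{(q)}_{\mu|N_L}$ is defined for $\mu\in\mathrm{Sign}_{N_L}^+$, so one should tacitly restrict to $L$ large enough that $N_L\geq n$, which is automatic from $N_L\to\infty$; and to note that the rescaling constant $s^*_\mu(q^{2(\mu-\delta_n)};q^2)/s^*_\mu(q^{2\mu};q^2)$ depends only on $\mu$ and $q$, not on $L$, so it passes through the limit without incident.
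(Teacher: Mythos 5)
Your proposal is correct and is exactly the argument the paper intends: the paper deduces this proposition directly from Proposition~\ref{prop:asymptotics_q_immanants} together with the fact that a central element acts as a scalar, so that by Equation~\eqref{eq:inn_cons} (or Lemma~\ref{lem:variance}) the expectation equals $t$ times that scalar, and the normalizing ratio $s^*_\mu(q^{2(\mu-\delta_n)};q^2)/s^*_\mu(q^{2\mu};q^2)$ in the definition of $Z^{(q)}_{\mu|N_L}$ cancels the corresponding factor in the asymptotic formula. Your bookkeeping of the scalar $c_{\mu,L}$, the division by $q^{-2(N_L-1)|\mu|}$, and the restriction to $N_L\geq n$ matches the paper's (implicit) proof.
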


Furthermore, as in Theorem \ref{thm:CLT_quantum_unitary}, the following central limit theorem holds true:
\begin{theorem}\label{thm:CLT_quantum_unitary}
    Let $\mu_1, \dots, \mu_m$ be integer partitions and 
    \[X^{(i)}_{N_L, t}:=\Lambda_t(Z^{(q)}_{\mu_i|N_L}), \quad \widetilde X^{(i)}_{N_L, t}:=\frac{X^{(i)}_{N_L, t}-\langle X^{(i)}_{N_L, t}\rangle_{N_L, t}}{N_L^{1/2}q^{-2(N_L-1)|\mu|}} \quad (i=1, \dots, m).\]
    Then, the family of processes $(\widetilde X^{(1)}_{N_L, Lt})_{t\geq 0}, \dots , (\widetilde X^{(m)}_{N_L, Lt})_{t\geq 0}$ with scaled time parameter $Lt$ converges in the moment sense to a Gaussian family as $L\to \infty$, i.e.,
    \[\lim_{L\to \infty}\langle \widetilde X^{(1)}_{N_L, Lt}\cdots \widetilde X^{(m)}_{N_L, Lt}\rangle_{N_L, Lt}=\begin{dcases}
        \sum_{\pi \in \mathcal{P}_2(m)}\prod_{\{i, j\}\in \pi}ts_{\mu_i}(\omega_\nu)s_{\mu_j}(\omega_\nu) & m \text{ is even}, \\ 0 & m \text{ is odd}.
    \end{dcases}\]
    The covariance of the Gaussian family in the large $L$ limit is given by $ts_{\mu_i}(\omega_\nu)s_{\mu_j}(\omega_\nu)$ for every $i, j=1, \dots, m$.
\end{theorem}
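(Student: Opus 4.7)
The plan is to follow the proof of Theorem \ref{thm:CLT_unitary} almost verbatim, assuming (as in that theorem) the compatibility condition $L/N_L\to 1$. The key simplifying observation is that $Z^{(q)}_{\mu_i|N_L}$ lies in the center of $U_q(\mathfrak{gl}_{N_L})$ and hence acts on the irreducible module $V_{N_L}$ by the scalar $a_i(N_L):=T_{\lambda(N_L)}(Z^{(q)}_{\mu_i|N_L})$, so $X^{(i)}_{N_L,t}=a_i(N_L)\,N_t$ with $N_t:=\Lambda_t(1\otimes 1_{V_{N_L}})$, and Proposition \ref{prop:asymptotics_q_immanants} gives $\tilde{a}_i(N_L):=a_i(N_L)/q^{-2(N_L-1)|\mu_i|}\to s_{\mu_i}(\omega_\nu)$.

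First, by Lemma \ref{lem:variance} and the stationary increments property (Remark \ref{rem:stat_inc}), I would decompose
\[
\widetilde{X}^{(i)}_{N_L, Lt}=\frac{1}{N_L^{1/2}}\sum_{n=0}^{L-1}\underline{X}^{(i)}_{N_L,[nt,(n+1)t)},\qquad \underline{X}^{(i)}_{N_L, E}:=\frac{X^{(i)}_{N_L,E}-\langle X^{(i)}_{N_L,t}\rangle_{N_L,t}}{q^{-2(N_L-1)|\mu_i|}},
\]
expand the $m$-fold product over $(n_1,\dots,n_m)\in\{0,\dots,L-1\}^m$, and group by the partition $\pi$ of $\{1,\dots,m\}$ that records coincidences among the $n_i$. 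Independence of increments on disjoint intervals (Remark \ref{rem:indep_inc}) then factorizes the expectation blockwise to yield
\[
\langle\widetilde{X}^{(1)}_{N_L,Lt}\cdots\widetilde{X}^{(m)}_{N_L,Lt}\rangle_{N_L,Lt}=\frac{1}{N_L^{m/2}}\sum_{\pi}L(L-1)\cdots(L-|\pi|+1)\prod_{\{i_1<\cdots<i_k\}\in\pi}\langle\underline{X}^{(i_1)}_{N_L,t}\cdots\underline{X}^{(i_k)}_{N_L,t}\rangle_{N_L,t}.
\]
The scalar identity above gives $\underline{X}^{(i)}_{N_L,t}=\tilde{a}_i(N_L)(N_t-t)$, so each block factor reduces to $\tilde{a}_{i_1}(N_L)\cdots\tilde{a}_{i_k}(N_L)\cdot m_k(t)$, where $m_k(t):=\langle(N_t-t)^k\rangle_{\psi_t}$ is the $k$-th centered moment of the number process; a direct computation using the derivative formula preceding Equation \eqref{eq:inn_cons2} identifies $m_k(t)$ with the $k$-th centered moment of a Poisson$(t)$ random variable, which is a finite constant independent of $L$. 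By Proposition \ref{prop:asymptotics_q_immanants}, each such block factor therefore converges to $\prod_l s_{\mu_{i_l}}(\omega_\nu)\cdot m_k(t)$ and is uniformly bounded in $L$.

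The remaining CLT combinatorics is standard. Singleton blocks vanish since $m_1(t)=0$, so any surviving $\pi$ has $|\pi|\leq m/2$; if $|\pi|<m/2$, the prefactor $L(L-1)\cdots(L-|\pi|+1)/N_L^{m/2}$ tends to $0$ under $L/N_L\to 1$. Hence only pair partitions contribute, and for each pair $\{i,j\}$ the identity $m_2(t)=t$ produces the contribution $t\,s_{\mu_i}(\omega_\nu)s_{\mu_j}(\omega_\nu)$, giving the stated Wick-type formula. The essentially only new input relative to the proof of Theorem \ref{thm:CLT_unitary} is the scalar reduction, which converts the quantum joint moments into products of centered Poisson moments weighted by the scalars $\tilde{a}_i(N_L)$; this is the main technical point, and once it is in place, Proposition \ref{prop:asymptotics_q_immanants} supplies the limiting scalars and the classical CLT combinatorics completes the argument.
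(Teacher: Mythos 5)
Your proposal is correct and follows essentially the same route the paper intends: the scalar reduction $T_{\lambda(N_L)}(Z^{(q)}_{\mu_i|N_L})=a_i(N_L)1_{V_{N_L}}$, the increment decomposition with independence/stationarity, the partition combinatorics from the proof of Theorem \ref{thm:general_CLT}/\ref{thm:CLT_unitary}, and Proposition \ref{prop:asymptotics_q_immanants} for the limiting scalars. Your explicit identification of the block factors with centered Poisson$(t)$ moments, and your explicit flagging of the compatibility condition $L/N_L\to 1$ (which the paper uses in Section \ref{sec:CLT_unitary_group} but does not restate in Section \ref{sec:quantum_unitary_groups}, though the stated covariance requires it), are consistent with and slightly more detailed than the paper's argument.
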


%%%%%%%%%%%%%%%%%%%%%%%%%%%%%%%%%%%%%%%%%%%%%%%%%%%%%%%%%%%%%%%%%%%%%%%%%%%%%%%%%%%%%%%%%%%%%%%%%%%%%%%%%%%%%%%%%%%%%%%%%%%%%%%%%%%%%%%%%%%%%%%%%%
\section{Concluding remarks}

For any $t>0$, the \emph{(one-sided) Plancherel character} $f_t$ of $U(\infty)$ is defined by 
\[f_t(u)=\prod_{z}e^{t(z-1)}=e^{t\mathrm{Tr}(u-1)},\] 
where $z$ runs over all eigenvalues of $u\in U(\infty)$. By the Edrei--Voiculescu theorem, it is an extreme character of $U(\infty)$. Moreover, $f_t$ can be realized by operator-valued processes defined on a symmetric Fock space. In fact, let $\mathfrak{h}_N:=L^2(\mathbb{R}_{\geq 0}; \mathbb{C}^N)\cong L^2(\mathbb{R}_{\geq 0})\otimes \mathbb{C}^N$ and define two types of unitary operators on $\mathcal{F}(\mathfrak{h}_N)$ by 
\[\rho_t(u)e(\psi):=e((1\otimes u)_t\psi), \quad w(\varphi)e(\psi):=\exp\left(-\frac{\|\varphi\|^2}{2}-\langle \varphi, \psi\rangle\right)e(\psi+\varphi)\]
for any $u\in U(N)$ and $\varphi, \psi\in \mathfrak{h}_N$. Here, $(1\otimes u)_t:=p([0, t))(1\otimes u)$ and $p([0, t))$ is the orthogonal projection onto $L^2([0, t); \mathbb{C}^N)$. Moreover, $w(\varphi)$ is called the \emph{Weyl operator}. Let $e_1, \dots, e_N\in \mathbb{C}^N$ denote the standard basis and define $e_{j, t}:=\mathbbm{1}_{[0, t)}\otimes e_j\in \mathfrak{h}_N$ ($j=1, \dots, N$). Then, we have 
\[f_t(u)=\left\langle \left(\bigotimes_{j=1}^Nw(e_{j, t})\right)^* \rho_t(u)^{\otimes N} \left(\bigotimes_{j=1}^Nw(e_{j, t}) \right)\Omega^{\otimes N}, \Omega^{\otimes N}\right\rangle\]
for any $u\in U(N)$, where $\Omega=e(0)$ is the vacuum vector. 

In this paper, we have not considered the bialgebra structure of $U(\mathfrak{gl}_N)$, but its comultiplication $\Delta\colon U(\mathfrak{gl}_N)\to U(\mathfrak{gl}_N)\otimes U(\mathfrak{gl}_N)$ is defined by $\Delta(X):=X\otimes 1+1\otimes X$ for any $X\in \mathfrak{gl}_N$. Moreover, the representation of $U(\mathfrak{gl}_N)$ derived from $(\rho_t, \mathcal{F}(\mathfrak{h}_N))$ is given as a solution of the following quantum stochastic differential equation (QSDE):
\[dj_t(X)=(j_t\star d\Lambda_t)(X)=\sum j_t(X_1)d\Lambda_t(X_2), \quad j_0(X)=0\]
for any $X\in U(\mathfrak{gl}_N)$, where $\Delta(X)=\sum X_1\otimes X_2$. See \cite{Parthasarathy:book,Franz06} for more details.

Finally, we would like to mention that the operator-valued processes given by $j_t$ on $Z(\mathfrak{gl}_N)$ has been studied in \cite{HP1994}. It would be a natural and interesting future direction to analyze the asymptotic behavior of these processes using quantum stochastic calculus. It seems to provide an algebraic understanding of the CLT in \cite{BB13,BB14,Kuan16}. Furthermore, extending the work in \cite{HP1994} to more general processes defined by QSDE also seems a promising direction.

%%%%%%%%%%%%%%%%%%%%%%%%%%%%%%%%%%%%%%%%%%%%%%%%%%%%%%%%%%%%%%%%%%%%%%%%%%%%%%%%%%%%%%%%%%%%%%%%%%%%%%%%%%%%%%%%%%%%%%%%%%%%%%%%%%%%%%%%%%%%%%%%%%
\section*{Acknowledgements}

The author gratefully acknowledges beneficial comments from Professor Akihito Hora, Professor Takahiro Hasebe, and Professor Vadim Gorin on the draft of this paper.

}
%%%%%%%%%%%%%%%%%%%%%%%%%%%%%%%%%%%%%%%%%%%%%%%%%%%%%%%%%%%%%%%%%%%%%%%%%%%%%%%%%%%%%%%%%%%%%%%%%%%%%%%%%%%%%%%%%%%%%%%%%%%%%%%%%%%%%%%%%%%%%%%%%%

\end{document}